\DeclareMathOperator{\Exc}{Exc}
\DeclareMathOperator{\Supp}{Supp}
\DeclareMathOperator{\Lie}{Lie}
\DeclareMathOperator{\Spec}{Spec}
\newtheorem{theorem}{Theorem}[section]
\newtheorem{corollary}[theorem]{Corollary}
\newtheorem{lemma}[theorem]{Lemma}
\newtheorem{proposition}[theorem]{Proposition}
\newtheorem{claim}[theorem]{Claim}
\newtheorem*{mainthm}{Main Theorem}
\newtheorem*{maincor}{Main Corollary}
\theoremstyle{definition}
\newtheorem{definition}[theorem]{Definition}
\newtheorem{remark}[theorem]{Remark}
\newcommand{\cO}{\mathcal{O}}
\newcommand{\bQ}{\mathbb{Q}}
\newcommand{\tld}{\tilde}
\DeclareMathOperator{\Hom}{{Hom}}
\newcommand{\Ical}{\mathcal{I}}
 \newcommand{\Ocal}{\mathcal{O}}
\newcommand{\veps}{\varepsilon}
\newcommand{\End}{\operatorname{End}}
\newcommand{\bg}{\mathbb{G}}
\newcommand{\co}{\mathcal{O}}
\newcommand{\al}{\alpha}
\newcommand{\beqn}{\begin{equation}}
\newcommand{\eeqn}{\end{equation}}
\def\Q{\mathbb{Q}}
\newcommand{\eqn}{\begin{eqnarray*}}
\newcommand{\eneqn}{\end{eqnarray*}}
\newcommand{\ext}{\mathscr{E}xt}
\renewcommand{\co}{\mathcal{O}}
\newcommand{\depth}{\operatorname{depth}}
\newcommand{\bc}{\mathbb{C}}
\begin{document}

\title{Richardson Varieties Have Kawamata Log Terminal Singularities}
\author{Shrawan Kumar and Karl Schwede}

\begin{abstract}
Let $X^v_w$ be a Richardson variety in the full flag variety $X$ associated to a
symmetrizable
Kac-Moody group $G$. Recall that $X^v_w$ is the intersection of the finite
dimensional Schubert variety $X_w$ with the finite
codimensional opposite Schubert variety $X^v$. We give an explicit $\bQ$-divisor
 $\Delta$ on $X^v_w$ and prove that the pair $(X^v_w, \Delta)$  has  Kawamata
 log
terminal singularities.
  In fact, $-K_{X^v_w} - \Delta$ is ample, which additionally proves
 that $(X^v_w, \Delta)$ is log Fano.

We first give a proof of our result in the finite case (i.e., in the case when $G$
is a finite dimensional semisimple group)
 by a careful analysis of an explicit  resolution of singularities of $X^v_w$
 (similar to the BSDH resolution of the Schubert varieties).
  In the general Kac-Moody case, in the absence of an explicit resolution
  of $X^v_w$ as above, we give a proof that relies on the
  Frobenius splitting methods. In particular, we use Mathieu's result
  asserting that the Richardson varieties are Frobenius split, and combine it
  with a result of N.~Hara and K.-I.~Watanabe relating Frobenius splittings
  with log canonical singularities.
\end{abstract}

\thanks{The first author was partially supported by the NSF grant DMS \#0901239.}
\thanks{The second author was partially supported by the NSF grant DMS \#1064485.}

\keywords{Richardson variety, Kawamata log terminal, $F$-pure, $F$-split, Kac-Moody group, vanishing theorem}
\subjclass[2010]{14M15, 14F18, 13A35, 14F17}

\address{S.K.: Department of Mathematics, University of North Carolina,
Chapel Hill, NC 27599-3250, USA}
\email{shrawan$@$email.unc.edu}

\address{K.S.: Department of Mathematics, Penn State University,
University Park, PA 16802, USA}
\email{schwede$@$math.psu.edu}

\maketitle

\section{Introduction}
Let $G$ be any symmetrizable Kac-Moody group over $\mathbb{C}$ (or any algebraically
closed field of characteristic zero) with the standard Borel
subgroup $B$, the standard negative Borel subgroup $B^{-}$, the
maximal torus $T=B\cap B^{-}$ and the Weyl group $W$.  Let $X=G/B$ be the full
flag
variety. For any $w\in W$, we
have the Schubert variety
$$
X_{w}:=\overline{BwB/B}\subset G/B
$$
and the opposite Schubert variety
$$
X^{w}:=\overline{B^{-}wB/B}\subset G/B.
$$
For any $v\leq w$, consider the \emph{Richardson variety $X^{v}_{w}$} which is
 defined to be the intersection of a Schubert variety and an opposite Schubert
 variety.
$$
X^{v}_{w}:=X^{v}\cap X_{w}
$$
with the reduced subscheme structure.
In this paper, we prove the following theorem.
\begin{mainthm} [Theorem \ref{thm2.2}]
With the notation as above, for any $v\leq w\in W$, there exists an effective
 $\bQ$-divisor $\Delta$ on $X^v_w$ such that $(X^v_w, \Delta)$ has Kawamata log
 terminal (for short KLT) singularities.

 Furthermore, $-K_{X^v_w} - \Delta$ is ample which proves that $(X^v_w, \Delta)$
 is also log Fano.
\end{mainthm}
This divisor $\Delta$, described  in Section \ref{sec2},  is built out
of the boundary $\partial X^v_w$.
As an immediate corollary of this result and Kawamata-Viehweg vanishing, we obtain
the following cohomology vanishing (due to Brion--Lakshmibai in the finite case).

\begin{maincor} [Corollary \ref{coro2.3}]
For a dominant integral weight $\lambda$, and any $v\leq w$,
$$
H^{i}(X^{v}_{w},\mathcal{L}(\lambda)_{|X^{v}_{w}})=0,\quad\text{for
  all}\quad i>0.
$$
\end{maincor}

Note, KLT singularities are a refinement of rational singularities.  In particular,
every KLT singularity is also a rational singularity, but not conversely except
in the Gorenstein case.  We note that, in the finite case,  Richardson varieties
have rational singularities
\cite[Theorem 4.2.1]{BrionLecturesOnGeometryOfFlagVarieties}, even in
positive characteristics \cite[Appendix]{KnutsonLamSpeyerProjectionsOfRichardson}.
 Indeed the singularities of generalizations of Richardson varieties has been a
 topic of interest lately \cite{BilleyCoskunSingsOfRichardson},
 \cite{KnutsonLamSpeyerProjectionsOfRichardson} and \cite{KnutsonWooYongSingularitiesOfRichardson}.

On the other hand, KLT are the widest class of singularities for which the foundational theorems of
the minimal model program over $\mathbb{C}$ are known to hold \cite{KollarMori}.  It is well known that toric varieties are KLT \cite[Section 11.4]{CoxLittleSchenckToricVarieties} and more generally V.~Alexeev and M.~Brion proved that spherical varieties are KLT \cite{AlexeevBrionStableReductiveVarieties2}.
Recently, D.~Anderson and A.~Stapledon proved that the Schubert varieties $X_w$
are log Fano and thus also KLT \cite{AndersonStapledonSchubertVarietiesAreLogFano},
also see \cite{HsiaoMultiplierIdealsOnDeterminantalVarieties}.

The proof of  our main result in the finite case is much simpler than the general
 Kac-Moody case and is given in Section 4.
 In this case  we are able to directly prove that  $(X^v_w, \Delta)$ is KLT
 through an explicit resolution of singularities of $X^v_w$  due to M. Brion (similar to
 the Bott-Samelson-Demazure-Hansen desingularization of the
 Schubert varieties).

In the general symmetrizable Kac-Moody case,  we are not aware of an explicit
resolution of singularities of $X^v_w$ to proceed as above. In the general
case, we prove our main result
by reduction to characteristic $p > 0$.  In this case we use an unpublished result
of O.~Mathieu asserting that the Richardson varieties $X^v_w$ are Frobenius split
compatibly splitting  their boundary (cf. Proposition \ref{prop4.1}).  This
splitting together with results of N.~Hara and K.-I.~Watanabe relating Frobenius
splittings and log canonical singularities (cf. Theorem \ref{thm4.5}) allow us
to conclude that the pair  $(X^v_w, \Delta)$ as above is KLT.


\noindent
{\bf Acknowledgements:} We thank V.~Alexeev, D.~Anderson, M. Brion,  M. Kashiwara and O. Mathieu for some
helpful correspondences.  We also thank the referee for numerous helpful comments and suggestions.

\section{Preliminaries and definitions}\label{sec1}

We follow the notation from \cite[Notation 0.4]{KollarMori}.  We  fix $X$ to be a
normal variety over an algebraically closed field.

Suppose that $\pi : \tilde{X} \to X$ is a proper birational map with $\tilde X$
normal.  For any $\bQ$-divisor $\Delta = \sum_i d_i D_i$ on $X$, we let $\Delta'
 = \pi^{-1}_* \Delta = \sum_{i} d_{i}D'_{i}$ denote the \emph{strict transform} of
 $\Delta$ defined as the $\Q$-divisor on $\tilde{X}$,
where $D'_{i}$ is the prime divisor on $\tilde{X}$ which is
birational to $D_{i}$ under $\pi$.  We let $\Exc(\pi)$ of $\pi$ be the exceptional
 set of $\pi$; the closed subset of $\tilde{X}$ consisting of those $x\in \tilde{X}$
 where $\pi$ is not biregular at $x$.  We endow $\Exc(\pi)$ with the reduced (closed) subscheme structure.
An (integral) divisor $D=\sum n_{i}F_{i}$ is called a {\em canonical divisor}
$K_{X}$ of $X$ if the restriction $D^{o}$ of $D$, to the smooth
locus $X^{o}$ of $X$, represents the canonical line bundle
$\omega_{X^{o}}$ of $X^{o}$.

Assume now that $K_{X}+\Delta$ is $\mathbb{Q}$-Cartier, i.e., some
multiple $n(K_{X}+\Delta)$ (for $n\in \mathbb{N}$) is a Cartier
divisor.  We may choose $K_{\tld X}$ that agrees with $K_X$ wherever $\pi$ is
 an isomorphism and thus it follows that there exists a (unique)
$\mathbb{Q}$-divisor $E_{\pi}(\Delta)$ on $\tilde{X}$ supported in
$\Exc(\pi)$ such that
\begin{equation}
n(K_{\tilde{X}}+\Delta') =
\pi^{*}(n(K_{X}+\Delta))+nE_{\pi}(\Delta). \label{eq1}
\end{equation}

A $\mathbb{Q}$-divisor $D=\sum d_{i}D_{i}$ on a smooth variety
$\tilde{X}$ is called a {\em simple normal crossing divisor} if each
$D_{i}$ is smooth and they intersect transversally at each
intersection point (in particular, this means that locally analytically the $D_i$ can be thought of as coordinate hyperplanes).


Let $X$ be an irreducible variety and $D$ a $\mathbb{Q}$-divisor on
$X$. A {\em
  log resolution} of $(X,D)$ is a proper birational morphism
$\pi:\tilde{X}\to X$ such that $\tilde{X}$ is smooth, $\Exc(\pi)$ is a
divisor and $\Exc(\pi)\cup \pi^{-1}(\Supp D)$ is a simple normal
crossing divisor. Log resolutions exist for any $(X,D)$ in characteristic zero
 by \cite{HironakaResolution}.

Let $X$ be a proper scheme. Then a $\mathbb{Q}$-Cartier
$\mathbb{Q}$-divisor $D$ is called {\em nef} (resp., {\em big}) if
$D\cdot C\geq 0$, for every irreducible curve $C\subset X$ (resp.,
$ND$ is the sum of an ample and an effective divisor, for some $N\in
\mathbb{N}$) (cf. \cite[\S\S 0.4 and 2.5]{KollarMori}). Recall that an ample
Cartier divisor is nef and big.

\begin{definition}\label{defi1.2}
 Let $X$ be a normal irreducible variety over a field of characteristic zero and let
$\Delta=\sum_{i} d_{i}D_{i}$ be a $\mathbb{Q}$-divisor with
$d_{i}\in [0,1)$. The pair $(X,\Delta)$ is called {\em Kawamata
log terminal}
  (for short {\em KLT}) if the following two conditions are satisfied:
\begin{itemize}
\item[{\rm(a)}] $K_{X}+\Delta$ is a $\mathbb{Q}$-Cartier $\mathbb{Q}$-divisor, and

\item[{\rm(b)}] There exists a log resolution $\pi:\tilde{X}\to X$ of $(X, \Delta)$ such that
the $\mathbb{Q}$-divisor $E=E_{\pi}(\Delta)=\sum_{i}e_{i}E_{i}$, defined by
 \eqref{eq1},
 satisfies
\begin{equation}
-1<e_{i}\quad\text{for all $i$.}\label{eq2}
\end{equation}
\end{itemize}

By \cite[Remarks 7.25]{DebarreHigherDimensionalAG}, $(X,\Delta)$ satisfying (a) is
KLT if and only if for every
proper birational map $\pi':Y\to X$ with normal $Y$, the divisor
$E_{\pi'}(\Delta)=\sum_{j}f_{j}F_{j}$ satisfies
\eqref{eq2}, i.e., $-1<f_{j}$ , for all $j$.  In fact, one may use this condition as a definition of KLT singularities in characteristic $p > 0$ (where it is an open question whether or not log resolutions exist).
\end{definition}

For a normal irreducible variety $X$ of characteristic zero with a
$\mathbb{Q}$-divisor $\Delta = \sum_i d_i D_i$ with $d_i \in [0,1]$, the pair $(X,\Delta)$ is called {\em
log canonical} if it satisfies the above conditions (a) and (b) with
\eqref{eq2} replaced by
\begin{equation}
-1\leq e_{i}\quad\text{for all $i$.}\label{eq3}
\end{equation}

\begin{remark}  Let $X$ be a variety of characteristic zero.
It is worth remarking that if $(X, \Delta)$ is KLT,  then $X$ has rational
singularities \cite{ElkikRationalityOfCanonicalSings}, \cite[$\S$5.22]{KollarMori}.
 Conversely, if $K_X$ is Cartier and $X$ has rational singularities, then $(X, 0)$
 is KLT \cite[$\S$5.24]{KollarMori}.
\end{remark}

We conclude this section with one final definition.

\begin{definition}
If $X$ is projective, we say that a pair $(X, \Delta)$ is \emph{log Fano} if
$(X, \Delta)$ is KLT and $-K_X - \Delta$ is ample.
\end{definition}

\section{Statement of the main result and its consequences}\label{sec2}

\subsection{Notation}\label{sec2.1}

Let $G$ be any symmetrizable Kac-Moody group over a field of characteristic zero with the standard Borel
subgroup $B$, the standard negative Borel subgroup $B^{-}$, the
maximal torus $T=B\cap B^{-}$ and the Weyl group $W$ (cf.
\cite[Sections 6.1 and 6.2]{KumarKacMoodyGroupsBook}). Let $X=G/B$ be the full flag
variety, which is a projective ind-variety. For any $w\in W$, we
have the Schubert variety
$$
X_{w}:=\overline{BwB/B}\subset G/B
$$
and the opposite Schubert variety
$$
X^{w}:=\overline{B^{-}wB/B}\subset G/B.
$$

Then, $X_{w}$ is a (finite dimensional) irreducible projective
subvariety of $G/B$ and $X^{w}$ is a finite codimensional
irreducible projective ind-subvariety of $G/B$ (cf.
\cite[Section 7.1]{KumarKacMoodyGroupsBook}). For any integral weight
$\lambda$ (i.e., any character
$e^{\lambda}$ of $T$), we have a $G$-equivariant line bundle
$\mathcal{L}(\lambda)$ on $X$ associated to the character
$e^{-\lambda}$ of $T$ (cf. \cite[Section 7.2]{KumarKacMoodyGroupsBook} for a precise
definition of $\mathcal{L}(\lambda)$ in the general Kac-Moody case). In the finite
case, recall that $\mathcal{L}(\lambda)$ is the line bundle associated to the
principal $B$-bundle $G \to G/B$ via the character $e^{-\lambda}$ of $B$ (any
character
of $T$ uniquely extends to a character of $B$), i.e.,
\[\mathcal{L}(\lambda)= G\times^B\mathbb{C}_{-\lambda} \to G/B, \,\,
[g,v]\mapsto gB,\]
where $\mathbb{C}_{-\lambda}$ is the one dimensional representation of $B$
corresponding to the character $e^{-\lambda}$ of $B$ and $[g,v]$ denotes the
equivalence class of $(g,v)\in  G\times \mathbb{C}_{-\lambda}$ under the $B$-action:
$b\cdot (g,v)=(gb^{-1}, b\cdot v).$

Let $\{\alpha_1,\ldots,\alpha_{\ell}\}\subset \mathfrak{t}^{*}$ be
the set of simple roots and
$\{\alpha_1^{\vee},\ldots,\alpha^{\vee}_{\ell}\}\subset
\mathfrak{t}$ the set of simple coroots, where $\mathfrak{t}=\Lie
T$. Let $\rho\in \mathfrak{t}^{*}$ be any integral weight satisfying
$$
\rho(\alpha^{\vee}_{i})=1,\quad\text{for all}\quad 1\leq i\leq \ell.
$$

When $G$ is a finite dimensional semisimple group, $\rho$ is unique,
but for a general Kac-Moody group $G$, it may not be unique.

For any $v\leq w \in W$, consider the \emph{Richardson variety}
$$
X^{v}_{w}:=X^{v}\cap X_{w},
$$
and its boundary
$$
\partial X^{v}_{w}:= ((\partial X^{v})\cap X_{w})\cup (X^{v}\cap
\partial X_{w})
$$
both endowed with reduced subvariety structure, where $\partial
X_{w}:=X_{w}\backslash (BwB/B)$ and $\partial X^{v}:=X^{v}\backslash
(B^{-}{v}B/B)$.

Writing $\partial X^{v}_{w}=\cup_{i}X_{i}$ as the union of its
irreducible components, the line bundle
$\mathcal{L}(2\rho)_{|X^{v}_{w}}$ can be written as a (Cartier)
divisor (for justification, see $\S$ \ref{sec3.5} -- the proof of
Theorem \ref{thm2.2}):
\begin{equation}
\mathcal{L}(2\rho)_{|X^{v}_{w}}=\cO_{X^v_w}\left( \sum_{i}b_{i}X_{i}\right),
\quad b_{i}\in
\mathbb{N}:=\{1,2,3, \dots \} .\label{eq4}
\end{equation}

Now, take a positive integer $N$ such that $N>b_{i}$ for all $i$,
and consider the $\mathbb{Q}$-divisor on $X^{v}_{w}$:
\begin{equation}
\Delta =\sum_{i}\left(1-\frac{b_{i}}{N}\right)X_{i}.\label{eq5}
\end{equation}

The following theorem is the main result of the paper.

\setcounter{theorem}{1}
\begin{theorem}\label{thm2.2}
For any $v\leq w\in W$, the pair $(X^{v}_{w},\Delta)$ defined above is KLT.
\end{theorem}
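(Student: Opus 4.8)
The plan is to separate the two halves of the statement — the KLT property and the log Fano property — and to attack the KLT part first in the finite-dimensional case, where an explicit resolution is available, deferring the Kac–Moody case to a Frobenius-splitting argument.

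\textbf{Step 1: Setting up the divisor-theoretic bookkeeping.} First I would justify the expression \eqref{eq4}. The point is that $X^v_w$ is normal (a standard fact about Richardson varieties, following from normality of Schubert and opposite Schubert varieties together with the transversality of their intersection), that $\mathcal{L}(2\rho)$ is globally generated with a section whose zero divisor is precisely a positive combination of the boundary components, and that the multiplicities $b_i$ are strictly positive because $2\rho$ is a regular dominant weight. This gives $K_{X^v_w} + \Delta \sim_{\bQ} K_{X^v_w} + \sum_i X_i - \tfrac{1}{N}\mathcal{L}(2\rho)|_{X^v_w}$; the combination $K_{X^v_w} + \sum_i X_i$ should be identified, up to $\bQ$-linear equivalence, with $-\mathcal{L}(\rho)|_{X^v_w}$ (an adjunction-type computation analogous to the Schubert-variety case), so that $-(K_{X^v_w}+\Delta) \sim_\bQ \mathcal{L}(\rho)|_{X^v_w} + \tfrac{1}{N}\mathcal{L}(2\rho)|_{X^v_w}$, which is ample. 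In particular $K_{X^v_w}+\Delta$ is $\bQ$-Cartier, so condition (a) of Definition \ref{defi1.2} holds and the log Fano ampleness claim reduces entirely to establishing KLT.

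\textbf{Step 2: The finite case via resolution.} In the finite-dimensional semisimple case I would take M.~Brion's BSDH-type resolution $\pi \colon Z \to X^v_w$, a smooth projective variety with a nice boundary. I would arrange that the total transform $\pi^{-1}(\partial X^v_w) \cup \Exc(\pi)$ is a simple normal crossing divisor (refining the resolution by further blow-ups if necessary), and then compute the discrepancies $e_i$ in \eqref{eq1}. The essential mechanism is: pulling back $\mathcal{L}(2\rho)$ along $\pi$ gives a globally generated line bundle whose relevant section vanishes along the strict transform of $\sum b_i X_i$ with exceptional corrections, while the relative canonical comparison for the BSDH-type tower is controlled just as in the classical Schubert case. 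One shows $K_Z + \Delta' = \pi^*(K_{X^v_w}+\Delta) + E$ with every coefficient of $E$ strictly greater than $-1$ because the coefficients of $\Delta$ lie in $[0,1)$ (we chose $N > b_i$) and the resolution contributes only non-negative or mildly negative but $> -1$ exceptional discrepancies; here one uses that a pair $(Z, \text{SNC boundary with coefficients} < 1)$ has the requisite positivity built in.

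\textbf{Step 3: The Kac–Moody case via Frobenius splitting, and the main obstacle.} Since no explicit resolution is available in general, here I would reduce modulo $p$: spread $X^v_w$ and $\Delta$ out over a finitely generated $\bZ$-algebra and use Mathieu's result (Proposition \ref{prop4.1}) that $X^v_w$ is Frobenius split compatibly with $\partial X^v_w$, hence — after adjusting the splitting by the section of $\mathcal{L}((p-1)2\rho)$ cutting out $(p-1)\sum b_i X_i$ — one gets a splitting ``along $\Delta$'' in the sense needed. Then I invoke Hara–Watanabe (Theorem \ref{thm4.5}) to conclude $(X^v_w, \Delta)$ is of $F$-pure / log canonical type, and finally upgrade from log canonical to KLT using the strictness $d_i < 1$ together with ampleness of $-(K_{X^v_w}+\Delta)$ and a perturbation argument (replacing $\Delta$ by a slightly larger divisor still with coefficients $< 1$, or using the characterization of KLT via all proper birational maps from Definition \ref{defi1.2}). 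The main obstacle I anticipate is the discrepancy bookkeeping in Step 2 — making the BSDH-type resolution's boundary genuinely SNC and pinning down that no exceptional divisor acquires discrepancy $\le -1$ — and, in Step 3, the compatibility of the reduction mod $p$ with the infinite-dimensionality of the ambient flag ind-variety, which requires that everything in sight (the variety, the divisor, the splitting) already lives on the finite-dimensional $X_w$.
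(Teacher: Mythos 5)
Your overall architecture is the same as the paper's (finite case via Brion's fiber-product resolution, Kac--Moody case via Mathieu's compatible splitting plus Hara--Watanabe plus a perturbation from log canonical to KLT), but two points need fixing. First, the adjunction formula in your Step 1 is wrong: for a Richardson variety $K_{X^v_w}+\partial X^v_w\sim 0$, not $\sim \mathcal{L}(-\rho)|_{X^v_w}$. The Schubert-type formulas $\omega_{X_w}\cong\mathcal{L}(-\rho)\otimes\cO_{X_w}(-\partial X_w)$ and $\omega_{X^v}\cong\mathcal{L}(-\rho)\otimes\cO_{X^v}(-\partial X^v)$ each contribute one copy of $\mathcal{L}(-\rho)$, and these cancel against $\omega_{\bar X}^{-1}\cong\mathcal{L}(2\rho)$ in $\omega_{X^v_w}\cong\omega_{X^v}\otimes\omega_{X_w}\otimes\omega_{\bar X}^{-1}$, giving $\omega_{X^v_w}\cong\cO_{X^v_w}(-\partial X^v_w)$ exactly (Lemma \ref{lem3.4}); hence $N(K_{X^v_w}+\Delta)$ is the Cartier divisor $-\sum_i b_iX_i$ of class $\mathcal{L}(-2\rho)|_{X^v_w}$. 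Your version happens to still yield ampleness of $-(K_{X^v_w}+\Delta)$, but the exact triviality of $K_{X^v_w}+\partial X^v_w$, and the matching cancellation upstairs $\omega_{Z^{\mathfrak{v}}_{\mathfrak{w}}}\cong\cO[-\partial Z^{\mathfrak{v}}_{\mathfrak{w}}]$ in \eqref{eq6}, is precisely what makes the finite-case discrepancy computation go through, and is also what makes $K_{X^v_w}+\partial X^v_w$ Cartier for the Hara--Watanabe step.

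Second, and more seriously, your Step 2 does not actually prove the discrepancy bound: asserting that the resolution ``contributes only non-negative or mildly negative but $>-1$ exceptional discrepancies'' because the coefficients of $\Delta$ lie in $[0,1)$ is exactly the statement to be proved, and it is false for a general pair with coefficients below $1$. The paper's mechanism is concrete: the fiber product $Z^{\mathfrak{v}}\times_X Z_{\mathfrak{w}}$ is already a log resolution (its boundary is SNC by the Kleiman transversality argument, so no extra blow-ups are needed --- and after extra blow-ups the bound would no longer be ``built in''); one writes $N(K_{X^v_w}+\Delta)=-H$ where $H$ is the divisor of the section $\beta(\chi_v)\cdot\beta(\chi_w)$ of $\mathcal{L}(2\rho)|_{X^v_w}$ whose zero set is exactly $\partial X^v_w$; and, crucially, every exceptional prime divisor $Z'_j$ lies over $\partial X^v_w$, so the pullback $\frac{1}{N}(m^{\mathfrak{v}}_{\mathfrak{w}})^*H$ has coefficient $d_j/N>0$ along $Z'_j$, giving $e_j=\frac{d_j}{N}-1>-1$. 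That strict positivity of $d_j$, extracted from the explicit section of $\mathcal{L}(2\rho)$, is the missing idea. Your Step 3 is essentially the paper's argument, except that the detour of twisting the splitting by a section of $\mathcal{L}((p-1)2\rho)$ is unnecessary and not obviously a splitting: compatible splitting of the reduced boundary gives sharp $F$-purity of $(X^v_w,\partial X^v_w)$ directly (Proposition \ref{prop4.3}), Theorem \ref{thm4.5} then gives log canonicity of that pair (using that $K_{X^v_w}+\partial X^v_w=0$ is Cartier and that $X^v_w$ stays normal mod $p\gg0$), and Lemma \ref{lem4.6} perturbs down to $\Delta$ to get KLT.
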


In fact, we will show in Lemma \ref{lem3.4} that $\cO_{X^v_w}(-N(K_{X^v_w} +
\Delta))
\cong \mathcal{L}(2\rho)_{|X^{v}_{w}}$ is ample, which proves that $(X^v_w, \Delta)$ is \emph{log Fano}.

We postpone the proof of this theorem until the next two sections.
But we derive the following consequence proved earlier in the finite case
(i.e., in the case when $G$ is a finite dimensional semisimple group) by
Brion-Lakshmibai
(see \cite[Proposition 1]{BrionLakshmibaiAGeometricApproachToStandardMonomialTheory}).

\begin{corollary}\label{coro2.3}
For a dominant integral weight $\lambda$, and any $v\leq w$,
$$
H^{i}(X^{v}_{w},\mathcal{L}(\lambda)_{|X^{v}_{w}})=0,\quad\text{for
  all}\quad i>0.
$$
\end{corollary}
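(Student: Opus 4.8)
The plan is to derive Corollary \ref{coro2.3} from Theorem \ref{thm2.2} via the Kawamata--Viehweg vanishing theorem. First I would recall the setup: by Theorem \ref{thm2.2} the pair $(X^v_w, \Delta)$ is KLT, and (as noted after the theorem, to be proved in Lemma \ref{lem3.4}) we have $\cO_{X^v_w}(-N(K_{X^v_w}+\Delta)) \cong \mathcal{L}(2\rho)_{|X^v_w}$, which is ample. In particular $-(K_{X^v_w}+\Delta)$ is an ample $\bQ$-Cartier $\bQ$-divisor on the normal projective variety $X^v_w$. Since $X^v_w$ is KLT-paired with $\Delta$, and $\mathcal{L}(\lambda)$ is nef for $\lambda$ dominant (being globally generated, as it is the restriction of a globally generated line bundle on $G/B$), the divisor $D := \mathcal{L}(\lambda)_{|X^v_w} - (K_{X^v_w}+\Delta)$ is the sum of a nef divisor and an ample divisor, hence ample, and in particular nef and big.

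Next I would invoke the Kawamata--Viehweg vanishing theorem in the form valid for KLT pairs (see \cite[\S 2.5 and Theorem 2.70]{KollarMori} or \cite[Theorem 6.3]{DebarreHigherDimensionalAG}): if $(X,\Delta)$ is KLT and $D$ is a $\bQ$-Cartier integral divisor such that $D - (K_X+\Delta)$ is nef and big, then $H^i(X, \cO_X(D)) = 0$ for all $i>0$. Applying this with $X = X^v_w$, the pair $\Delta$ as constructed in \eqref{eq5}, and $D$ the Cartier divisor corresponding to $\mathcal{L}(\lambda)_{|X^v_w}$ (note $\mathcal{L}(\lambda)$ is a genuine line bundle, so $D$ is Cartier, hence $\bQ$-Cartier), we need $D - (K_{X^v_w}+\Delta) = \mathcal{L}(\lambda)_{|X^v_w} - (K_{X^v_w}+\Delta)$ to be nef and big; this is exactly what was checked in the previous step. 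The conclusion is $H^i(X^v_w, \mathcal{L}(\lambda)_{|X^v_w}) = 0$ for all $i > 0$, as desired.

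I would then address the one genuinely delicate point, which is the only real obstacle: in the Kac-Moody case, $X^v_w$ is a priori a subvariety of the projective ind-variety $G/B$, so one must confirm that $X^v_w$ is itself an honest projective variety (finite-dimensional, since $X_w$ is finite-dimensional by \cite[Section 7.1]{KumarKacMoodyGroupsBook}), that $\mathcal{L}(\lambda)_{|X^v_w}$ is an honest ample/nef line bundle on it, and that the machinery of Kawamata--Viehweg vanishing applies verbatim. Since $X^v_w \subseteq X_w$ and $X_w$ is a finite-dimensional irreducible projective variety, $X^v_w$ is a closed subvariety of a finite-dimensional projective variety, hence projective of finite type; the restriction $\mathcal{L}(\lambda)_{|X^v_w}$ is the restriction of a line bundle generated by global sections (the dual Weyl module gives sections of $\mathcal{L}(\lambda)$ on $X_w$ separating points, so $\mathcal{L}(\lambda)_{|X_w}$ and a fortiori $\mathcal{L}(\lambda)_{|X^v_w}$ is globally generated, hence nef), so there is no difficulty. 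The rest is a direct citation.

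Finally, a remark on the finite-case attribution: Brion--Lakshmibai proved this vanishing by different (standard monomial / Frobenius splitting) methods; our derivation is uniform in that it treats the finite and Kac-Moody cases simultaneously, relying only on Theorem \ref{thm2.2} and Lemma \ref{lem3.4} together with Kawamata--Viehweg vanishing, the latter being available in characteristic zero over any algebraically closed field. The main subtlety, as indicated, is simply ensuring that all the hypotheses of Kawamata--Viehweg (normality, projectivity, KLT, nef-and-big twist) are in place for $X^v_w$, which follows immediately once Theorem \ref{thm2.2} and Lemma \ref{lem3.4} are granted.
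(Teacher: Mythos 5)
Your proposal is correct and is essentially the paper's own argument: both deduce the statement from the logarithmic Kawamata--Viehweg vanishing theorem applied to the KLT pair $(X^{v}_{w},\Delta)$ of Theorem \ref{thm2.2}, using Lemma \ref{lem3.4} to identify $N(K_{X^{v}_{w}}+\Delta)$ with the Cartier divisor of $\mathcal{L}(-2\rho)_{|X^{v}_{w}}$ and the dominance of $\lambda$ to see that the required twist is nef and big (the paper takes $D$ with $ND$ the divisor of the ample bundle $\mathcal{L}(N\lambda+2\rho)_{|X^{v}_{w}}$, while you verify that $\mathcal{L}(\lambda)_{|X^{v}_{w}}-(K_{X^{v}_{w}}+\Delta)$ is nef plus ample, which is the same computation). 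The only differences are cosmetic: which divisor is named $D$ in the vanishing theorem, and your added (correct) remarks on projectivity of $X^{v}_{w}$ and global generation of $\mathcal{L}(\lambda)$.
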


\begin{proof}
By (the subsequent) Lemma \ref{lem3.4}, $N(K_{X^{v}_{w}}+\Delta)$ is a
Cartier divisor corresponding to the line bundle
$\mathcal{L}(-2\rho)_{|X^{v}_{w}}$. Since $\lambda$ is a dominant
weight, the $\mathbb{Q}$-Cartier divisor $D$ is nef and big, where
$ND$ is the Cartier divisor corresponding to the ample line bundle
$\mathcal{L}(N\lambda+2\rho)_{|X^{v}_{w}}$. Thus, the divisor
$K_{X^{v}_{w}}+\Delta+D$ is Cartier and corresponds to the line bundle
$\mathcal{L}(\lambda)_{|X^{v}_{w}}$. Hence, the corollary follows
from the Logarithmic Kawamata-Viehweg vanishing theorem which we state below
(cf. \cite[Theorem 7.26]{DebarreHigherDimensionalAG} or \cite[Theorem 2.70]{KollarMori}).
\end{proof}

\begin{theorem}\label{thm2.4}
Let $(X,\Delta)$ be a KLT pair and let $D$ be a nef and big
$\mathbb{Q}$-Cartier $\mathbb{Q}$-divisor on $X$ such that
$K_{X}+\Delta+D$ is a Cartier divisor. Then, we have
$$
H^{i}(X,K_{X}+\Delta+D)=0,\quad\text{for all}\quad i>0.
$$
\end{theorem}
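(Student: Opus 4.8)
The plan is to deduce this from the classical Kawamata--Viehweg (Nadel) vanishing theorem on a smooth projective variety, by passing to a log resolution of $(X,\Delta)$ and then descending along it via local vanishing and the Leray spectral sequence. Write $L := K_X + \Delta + D$, which is Cartier by hypothesis; note that $K_X + \Delta = L - D$ is then $\mathbb{Q}$-Cartier, so the discrepancy relation \eqref{eq1} is available. Let $\pi \colon Y \to X$ be a log resolution of $(X,\Delta)$ and, fixing compatible canonical divisors, write
\[
  K_Y + \pi^{-1}_* \Delta \;=\; \pi^*(K_X + \Delta) + E_\pi(\Delta), \qquad E_\pi(\Delta) = \sum_i e_i E_i,
\]
where the $E_i$ are the $\pi$-exceptional prime divisors and $e_i > -1$ for all $i$ by the KLT hypothesis, in the birational-invariance form recorded in Definition \ref{defi1.2}.

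Next I would set $B := \pi^{-1}_* \Delta - \sum_i e_i E_i$. Since $\pi^{-1}_* \Delta$ has coefficients in $[0,1)$, each $-e_i < 1$, and $\pi$ is a log resolution, $B$ is a $\mathbb{Q}$-divisor with simple normal crossing support all of whose coefficients are $< 1$; consequently $\lfloor B \rfloor$ is an integral divisor which is $\pi$-exceptional and satisfies $\lfloor B \rfloor \le 0$. Put $M := \pi^* L - \lfloor B \rfloor$, a Cartier divisor on $Y$. From the displayed equation one computes
\[
  M \;=\; K_Y + \pi^* D + \bigl( B - \lfloor B \rfloor \bigr),
\]
where $\pi^* D$ is nef and big (being the pullback of a nef and big $\mathbb{Q}$-divisor under the proper birational morphism $\pi$) and $B - \lfloor B \rfloor$ is an effective $\mathbb{Q}$-divisor with simple normal crossing support and all coefficients in $[0,1)$. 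The Kawamata--Viehweg vanishing theorem on the smooth projective variety $Y$ therefore yields $H^i(Y, \cO_Y(M)) = 0$ for all $i > 0$.

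Finally I would descend along $\pi$. Since $-\lfloor B \rfloor$ is an effective $\pi$-exceptional divisor we have $\pi_* \cO_Y(M) = \cO_X(L)$; and the local vanishing $R^j\pi_* \cO_Y(M) = 0$ for $j > 0$ --- a relative form of Kawamata--Viehweg vanishing for the morphism $\pi$ (equivalently a Grauert--Riemenschneider-type statement, using that $M - K_Y = \pi^* D + (B - \lfloor B\rfloor)$ with $\pi^* D$ being $\pi$-nef) --- then combines with the Leray spectral sequence to give $H^i(X, \cO_X(L)) = H^i(Y, \cO_Y(M)) = 0$ for all $i > 0$, as desired. The genuinely deep input here is the smooth Kawamata--Viehweg vanishing theorem itself; granting it, the step that needs care is the rounding bookkeeping above, where one must arrange simultaneously that $\lfloor B \rfloor$ is $\pi$-exceptional and $\le 0$ (so that the push-forward reproduces precisely $\cO_X(L)$), that $B - \lfloor B \rfloor$ is a bona fide simple normal crossing boundary, and that the relative vanishing $R^j\pi_*\cO_Y(M) = 0$ holds --- all of which rely on the strict inequalities $e_i > -1$ supplied by the KLT hypothesis. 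Alternatively, one may simply invoke \cite[Theorem 7.26]{DebarreHigherDimensionalAG} or \cite[Theorem 2.70]{KollarMori}, where precisely this statement is recorded.
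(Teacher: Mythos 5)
Your argument is correct, and it is essentially the standard proof of this statement. Note, however, that the paper does not prove Theorem \ref{thm2.4} at all: it is quoted as the logarithmic Kawamata--Viehweg vanishing theorem with a citation to \cite[Theorem 7.26]{DebarreHigherDimensionalAG} and \cite[Theorem 2.70]{KollarMori}, which is exactly the ``alternative'' you mention in your last sentence. Your reduction is the same one carried out in those references: pass to a log resolution $\pi:Y\to X$, use the birational-invariance form of the KLT condition (as recorded after Definition \ref{defi1.2}) to know $e_i>-1$ on this particular resolution, set $B=\pi^{-1}_*\Delta-\sum_i e_iE_i$ so that $K_Y+B=\pi^*(K_X+\Delta)$, observe that $\lfloor B\rfloor$ is $\pi$-exceptional and $\leq 0$ while $\{B\}=B-\lfloor B\rfloor$ is an SNC boundary with coefficients in $[0,1)$, apply smooth Kawamata--Viehweg to $M=\pi^*L-\lfloor B\rfloor=K_Y+\pi^*D+\{B\}$ (here $\pi^*D$ is nef and big since $\pi$ is birational), and descend via $\pi_*\cO_Y(M)=\cO_X(L)$ (normality of $X$ plus effectivity and exceptionality of $-\lfloor B\rfloor$) together with $R^j\pi_*\cO_Y(M)=0$, which is the relative Kawamata--Viehweg statement with $M-K_Y\equiv_\pi\{B\}$ (relative bigness being automatic for a birational morphism) and the Leray spectral sequence. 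Two small points worth making explicit: the statement implicitly assumes $X$ proper (projective in the application to $X^v_w$), which you need both for the nef-and-big hypothesis to have content and to choose $Y$ projective; and $\pi^*L$ is Cartier because $L$ is, so $M$ is an honest integral divisor on the smooth $Y$, as your application of the vanishing theorem requires. With these caveats your bookkeeping is accurate, so the proposal stands as a complete proof of the quoted theorem rather than a different route from the paper's (nonexistent) one.
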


\section{Proof of Theorem \ref{thm2.2}: Finite case}\label{sec3}

In this section, except where otherwise noted, we assume that $G$ is a finite dimensional
semisimple simply-connected group. We refer to this as the {\em
finite case}.

We first give a proof of Theorem \ref{thm2.2} in the finite case. In
this case, the proof is much simpler than the general (symmetrizable)
Kac-Moody case proved in the next section. Unlike the general case,
the proof in the finite case given below does not require any use of
characteristic $p>0$ methods.

Before we come to the proof of the theorem, we need some
preliminaries on Bott-Samelson-Demazure-Hansen (for short BSDH)
desingularization of Schubert varieties.

\subsection{BSDH desingularization}\label{sec3.1}

For any $w\in W$, pick a reduced decomposition as a product of simple
reflections:
$$
w=s_{i_1}\ldots s_{i_{n}}
$$
and let $m_{\mathfrak{w}}:Z_{\mathfrak{w}}\to X_{w}$ be the BSDH
desingularization (cf. \cite[\S 2.2.1]{BrionKumarFrobeniusSplitting}), where $\mathfrak{w}$ is
the word $(s_{i_{1}},\dots,s_{i_{n}})$. This is a $B$-equivariant
resolution, which is an isomorphism over the cell $C_w:=BwB/B \subset X_w$.

Similarly, there is a $B^{-}$-equivariant resolution
$$
m^{\mathfrak{v}}:Z^{\mathfrak{v}}\to X^{v},
$$
obtained by taking a reduced word
$\hat{\mathfrak{v}}=(s_{j_{1}}, \dots , s_{j_{m}})$ for $w_{0}v$, i.e.,
$w_{0}v=s_{j_{1}} \ldots s_{j_{m}}$ is a reduced decomposition,
where $w_{0}\in W$ is the longest element. Now, set
$$
Z^{\mathfrak{v}}=Z_{\hat{\mathfrak{v}}},
$$
which is canonically a $B$-variety. We define the action of $B^{-}$
on $Z^{\mathfrak{v}}$ by twisting the $B$-action as follows:
$$
b^{-}\odot z = (\dot{w}_{0}b^{-}\dot{w}^{-1}_{0})\cdot
z,\quad\text{for}\,\, b^{-}\in B^{-}\,\,\,\text{and}\,\, z\in
Z^{\mathfrak{v}},
$$
where $\dot{w}_{0}$ is a lift of $w_{0}$ in the normalizer $N(T)$ of
the torus $T$. (Observe that this action does depend upon the choice of the
lift $\dot{w}_{0}$ of $w_{0}$.) Moreover, define the map
$$
 m^{\mathfrak{v}}:Z^{\mathfrak{v}}\to
  X^{v}=\dot{w}_{0}^{-1}X_{w_{0}v}\quad\text{by}\,\,
  m^{\mathfrak{v}}(z)=\dot{w}^{-1}_{0}(m_{\hat{\mathfrak{v}}}(z)),\,\,\,
  \text{for}\,\,
  z\in Z^{\mathfrak{v}}.
$$

Clearly, $m^{\mathfrak{v}}$ is a $B^{-}$-equivariant
desingularization.

\subsection{Desingularization of Richardson varieties}\label{sec3.2}

We recall the construction of a desingularization of Richardson
varieties communicated to us by M. Brion
(also see \cite[Section 1]{BalanStandardMonomialTheoryForRichardson}).
 It is worked out in detail in any characteristic in
 \cite[Appendix]{KnutsonLamSpeyerProjectionsOfRichardson}. We briefly sketch the
 construction in characteristic zero.  Consider the fiber product
morphism
$$
m^{\mathfrak{v}}_{\mathfrak{w}}:=m^{\mathfrak{v}}\times_{X}m_{\mathfrak{w}}:
Z^{\mathfrak{v}}\times_{X}Z_{\mathfrak{w}}\to
X^{v}\times_{X}X_{w}=X^{v}_{w},
$$
which is a smooth desingularization. It is an isomorphism over the
intersection $C^{v}_{w}:=C^{v}\cap C_{w}$ of the Bruhat cells, where
$C^{v}:=B^{-}vB/B\subset G/B$ and (as earlier) $C_{w}:=BwB/B$. Moreover, the
complement of $C^{v}_{w}$ inside $Z^{\mathfrak{v}}_{\mathfrak{w}}$,
considered as a reduced divisor, is a simple normal crossing
divisor. (To prove these assertions, observe that by Kleiman's
transversality theorem \cite[Theorem 10.8, Chap. III]{Hartshorne}, the fiber product
$Z^{\mathfrak{v}}\times_{X}gZ_{\mathfrak{w}}$ is smooth for a
general $g\in G$ and hence for some $g\in B^{-}B$. But since
$Z_{\mathfrak{w}}$ is a $B$-variety and $Z^{\mathfrak{v}}$ is a
$B^{-}$-variety,
$$
Z^{\mathfrak{v}}\times_{X}gZ_{\mathfrak{w}}\simeq
Z^{\mathfrak{v}}\times_{X}Z_{\mathfrak{w}}.
$$
Moreover,
$Z^{\mathfrak{v}}\times_{X}Z_{\mathfrak{w}}$ is irreducible since
each of its irreducible components is of the same dimension equal to
$\ell (w)-\ell (v)$ and the complement of $C^v_w$ in $Z^{\mathfrak{v}}\times_{X}
Z_{\mathfrak{w}}$ is of dimension $< \ell (w)-\ell (v)$.)
\setcounter{theorem}{2}
\begin{lemma}\label{lem3.3}
With the notation as above (still in characteristic zero), for any $v\leq w$, the Richardson variety $X^{v}_{w}$
is irreducible,
normal and Cohen-Macaulay.
\end{lemma}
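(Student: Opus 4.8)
The plan is to extract irreducibility and the dimension directly from the desingularization built above, then to obtain Cohen--Macaulayness from a proper-intersection argument inside $G/B$, and finally to deduce normality from the resolution by a base-change computation. Irreducibility is immediate: we have already seen that $Z^{\mathfrak{v}}\times_{X}Z_{\mathfrak{w}}$ is irreducible of dimension $\ell(w)-\ell(v)$, and $X^{v}_{w}$ is its image under the proper, surjective, birational morphism $m^{\mathfrak{v}}_{\mathfrak{w}}$; hence $X^{v}_{w}$ is irreducible of dimension $\ell(w)-\ell(v)$.

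For Cohen--Macaulayness I would work inside the smooth projective variety $G/B$. By the classical structure theory of Schubert varieties (cf. \cite{BrionKumarFrobeniusSplitting}), $X_{w}$ is Cohen--Macaulay of dimension $\ell(w)$, and $X^{v}$, being the $G$-translate $\dot{w}_{0}^{-1}X_{w_{0}v}$ of a Schubert variety, is Cohen--Macaulay of dimension $\dim G/B-\ell(v)$. Since
\[
\dim X^{v}_{w}=\ell(w)-\ell(v)=\dim X_{w}+\dim X^{v}-\dim G/B,
\]
the subvarieties $X_{w}$ and $X^{v}$ meet properly in $G/B$. Hence their scheme-theoretic intersection is Cohen--Macaulay and $\operatorname{Tor}_{i}^{\mathcal{O}_{G/B}}(\mathcal{O}_{X_{w}},\mathcal{O}_{X^{v}})=0$ for $i>0$ — the standard fact that a proper intersection of Cohen--Macaulay subschemes of a regular scheme is again Cohen--Macaulay and Tor-independent. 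A Cohen--Macaulay scheme satisfies Serre's condition $S_{1}$, hence has no embedded components; and $X_{w}\cap X^{v}$ is generically reduced, because over the dense open subset $C^{v}_{w}=C^{v}\cap C_{w}$ the intersection is transverse by Kleiman's theorem (using that $C_{w}$ is $B$-stable and $C^{v}$ is $B^{-}$-stable). Therefore $X_{w}\cap X^{v}$ is reduced, so it coincides with $X^{v}_{w}$, and $X^{v}_{w}$ is Cohen--Macaulay.

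For normality, the idea is to prove $(m^{\mathfrak{v}}_{\mathfrak{w}})_{*}\mathcal{O}_{Z^{\mathfrak{v}}\times_{X}Z_{\mathfrak{w}}}=\mathcal{O}_{X^{v}_{w}}$: since $Z^{\mathfrak{v}}\times_{X}Z_{\mathfrak{w}}$ is smooth and $m^{\mathfrak{v}}_{\mathfrak{w}}$ is proper and birational, this equality forces $X^{v}_{w}$ to agree with its own normalization. I would use that the BSDH resolutions are \emph{rational}, i.e.\ $Rm_{\mathfrak{w}*}\mathcal{O}_{Z_{\mathfrak{w}}}\cong\mathcal{O}_{X_{w}}$ and $Rm^{\mathfrak{v}}_{*}\mathcal{O}_{Z^{\mathfrak{v}}}\cong\mathcal{O}_{X^{v}}$ in the derived category (cf. \cite{BrionKumarFrobeniusSplitting}). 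Because $Z^{\mathfrak{v}}\times_{X}Z_{\mathfrak{w}}$ is smooth of the expected dimension, it is in fact the \emph{derived} fiber product over $G/B$: the diagonal $G/B\hookrightarrow G/B\times G/B$ is a regular embedding, and its pullback to the smooth variety $Z^{\mathfrak{v}}\times Z_{\mathfrak{w}}$ is locally cut out by a regular sequence, so $Z^{\mathfrak{v}}$ and $Z_{\mathfrak{w}}$ are Tor-independent over $G/B$. Derived base change along $m^{\mathfrak{v}}$ and the projection formula then give
\[
Rm^{\mathfrak{v}}_{\mathfrak{w}*}\mathcal{O}_{Z^{\mathfrak{v}}\times_{X}Z_{\mathfrak{w}}}\;\cong\;\mathcal{O}_{X^{v}}\otimes^{L}_{\mathcal{O}_{G/B}}\mathcal{O}_{X_{w}},
\]
and the Tor-vanishing from the previous paragraph shows the right-hand side is concentrated in degree $0$ and equal to $\mathcal{O}_{X_{w}\cap X^{v}}=\mathcal{O}_{X^{v}_{w}}$. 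In particular $(m^{\mathfrak{v}}_{\mathfrak{w}})_{*}\mathcal{O}=\mathcal{O}_{X^{v}_{w}}$, so $X^{v}_{w}$ is normal (and, incidentally, has rational singularities).

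The hard part is the normality step; irreducibility and Cohen--Macaulayness are essentially bookkeeping once one has the transversality already established and the known structure theory of Schubert varieties. Within the normality argument the delicate point is setting up the derived base-change identity for the fiber product correctly (the Tor-independence over $G/B$), the genuine geometric input again being Kleiman transversality, which is exactly what makes both the intersection proper and the fiber product smooth. Alternatively, one could bypass the base-change computation entirely and simply invoke the known fact that Richardson varieties have rational singularities, hence are normal and Cohen--Macaulay (cf. \cite[Theorem 4.2.1]{BrionLecturesOnGeometryOfFlagVarieties}).
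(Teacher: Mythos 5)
Your argument is correct for the setting in which Lemma \ref{lem3.3} is stated (the finite case of Section \ref{sec3}), but it is worth noting that the paper does not actually prove the lemma: it only points to \cite[Lemma 2]{BrionPositivityInGrothendieckGroupOfFlagVars}, \cite[Lemma 1]{BrionLakshmibaiAGeometricApproachToStandardMonomialTheory} and, for the Kac--Moody case, to \cite[Proposition 6.5]{kum12}. What you have written is essentially a reconstruction of Brion's cited proof: irreducibility from the fiber-product resolution of \S\ref{sec3.2} (which itself quietly uses Richardson's irreducibility of $C^v\cap C_w$); Cohen--Macaulayness and reducedness of the scheme-theoretic intersection from the proper intersection of the Cohen--Macaulay varieties $X_w$ and $X^v$ in $G/B$ plus Kleiman transversality over $C^v_w$ --- the same local Tor-vanishing that the paper later invokes from \cite[Lemma 5.5]{kum12} in the proof of Lemma \ref{lem3.4}; and normality (indeed rationality) via Tor-independent base change and the rationality of the BSDH resolutions, giving $R(m^{\mathfrak{v}}_{\mathfrak{w}})_*\mathcal{O}\cong\mathcal{O}_{X^v}\otimes^{L}_{\mathcal{O}_{G/B}}\mathcal{O}_{X_w}\cong\mathcal{O}_{X^v_w}$, which is a clean and valid way to organize that step. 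The trade-off to flag is scope: your proof is genuinely finite-dimensional (it uses $w_0$, Kleiman's theorem and the explicit resolution of \S\ref{sec3.2}), whereas the paper ultimately needs the conclusion of Lemma \ref{lem3.3} for a general symmetrizable Kac--Moody group --- normality in characteristic $0$ is exactly what feeds into Corollary \ref{coro4.4} --- and that generality is precisely what the citation to \cite[Proposition 6.5]{kum12} supplies; as the remark at the end of Section \ref{sec3} explains, no analogue of the resolution of \S\ref{sec3.2} is available there, so your method does not extend. Your closing alternative (quoting \cite[Theorem 4.2.1]{BrionLecturesOnGeometryOfFlagVarieties}) is in effect what the paper does, again only in the finite case.
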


This is proven in the finite case in
\cite[Lemma 2]{BrionPositivityInGrothendieckGroupOfFlagVars} and
 \cite[Lemma 1]{BrionLakshmibaiAGeometricApproachToStandardMonomialTheory}. The
 same result (with a similar proof as in \cite{BrionPositivityInGrothendieckGroupOfFlagVars})
also holds in the Kac-Moody case (cf.  \cite[Proposition 6.5]{kum12}).  Also see \cite{KnutsonLamSpeyerProjectionsOfRichardson} for some discussion in characteristic $p > 0$.

\begin{lemma}\label{lem3.4} For any symmetrizable Kac-Moody group $G$, and any
$v\leq w \in W$,
the canonical divisor $K_{X^{v}_{w}}$ of $X^{v}_{w}$ is given by:
$$
K_{X^{v}_{w}}=\mathcal{O}_{X^{v}_{w}}[-\partial X^{v}_{w}],
$$
where $\partial X^{v}_{w}$ is considered as a reduced divisor.
\end{lemma}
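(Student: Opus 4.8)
The plan is to compute the canonical bundle of the Richardson variety by pulling back to the smooth resolution $m^{\mathfrak{v}}_{\mathfrak{w}}: Z^{\mathfrak{v}}_{\mathfrak{w}} := Z^{\mathfrak{v}} \times_X Z_{\mathfrak{w}} \to X^v_w$ constructed in $\S$\ref{sec3.2}, and then to descend the identity using normality and the fact that the resolution is an isomorphism over the open cell $C^v_w$, whose complement has the simple normal crossing structure we need. In the Kac-Moody case where no such explicit resolution is available, one instead argues directly on $X^v_w$ using adjunction along the finite-dimensional/finite-codimensional Schubert varieties, so I would phrase the core computation so that both approaches feed into it.

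First I would recall the known canonical bundle formulas for the Schubert pieces: on $X_w$ one has $\omega_{X_w} \cong \mathcal{L}(-2\rho)_{|X_w} \otimes \mathcal{O}_{X_w}(-\partial X_w)$ (this is Ramanathan's formula; see \cite{BrionKumarFrobeniusSplitting} in the finite case and \cite{KumarKacMoodyGroupsBook, kum12} in general), and similarly $\omega_{X^v}$ is built from $\mathcal{L}(2\rho)$ twisted by $-\partial X^v$ on the opposite side, with the sign of the $\mathcal{L}(2\rho)$ contribution flipped because $X^v$ is a translate of a Schubert variety by $\dot w_0$ and $w_0$ acts by $-1$ on $\rho$ up to the $-w_0$-twist — concretely $\omega_{X^v} \cong \mathcal{L}(2\rho)_{|X^v} \otimes \mathcal{O}_{X^v}(-\partial X^v)$. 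Second, I would invoke an adjunction-type argument: since $X^v$ and $X_w$ meet properly and, after resolving, transversally (the fiber product is smooth of the expected dimension by the Kleiman transversality argument already recorded in $\S$\ref{sec3.2}), the canonical bundle of the intersection is the tensor product of the restrictions of $\omega_{X_w}$ and $\omega_{X^v}$, corrected by the conormal bundle contribution; because $X^v$ is cut out set-theoretically by the "opposite" divisorial data and the geometry is that of a transverse intersection inside the smooth locus of $X$, the $\mathcal{L}(2\rho)$ and $\mathcal{L}(-2\rho)$ contributions cancel, leaving precisely $\omega_{X^v_w} \cong \mathcal{O}_{X^v_w}(-\partial X^v_w)$, where $\partial X^v_w = ((\partial X^v)\cap X_w) \cup (X^v \cap \partial X_w)$ picks up exactly the boundary divisors from each side.

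Concretely on the resolution: writing $E^{\mathfrak{v}}_{\mathfrak{w}}$ for the reduced complement of $C^v_w$ in $Z^{\mathfrak{v}}_{\mathfrak{w}}$, which is a simple normal crossing divisor, I would first check the statement on the smooth locus. On the big open cell $C^v_w \cong Z^{\mathfrak{v}}_{\mathfrak{w}} \setminus E^{\mathfrak{v}}_{\mathfrak{w}}$ the resolution is an isomorphism and $C^v_w$ is a smooth (affine) toric-like cell where both sides of the claimed identity are visibly trivial; this handles the restriction to $X^o := (X^v_w)^{\mathrm{sm}} \cap C^v_w$. Then, since $X^v_w$ is normal by Lemma \ref{lem3.3}, a Weil-divisor identity that holds off a codimension-two set holds everywhere, so it suffices to identify the coefficient of each prime boundary divisor $X_i$ of $\partial X^v_w$. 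Each $X_i$ is the image of a boundary divisor $E_j \subset E^{\mathfrak{v}}_{\mathfrak{w}}$; pulling the BSDH-type canonical bundle formulas through the fiber product and using that $m^{\mathfrak{v}}_{\mathfrak{w}}$ is crepant-type along these divisors (the discrepancy computation being exactly the one that will reappear when proving the KLT statement), I read off coefficient $1$ for every $X_i$, which is the assertion. As a sanity check the $\mathcal{L}(2\rho)$ terms must drop out, and they do because $\rho - w_0(-\rho)$-type bookkeeping: the line bundle $\mathcal{L}(2\rho)$ enters $\omega_{X_w}$ with a minus sign and $\omega_{X^v}$ with a plus sign, reflecting that $X_w$ grows in the $B$-direction and $X^v$ in the $B^-$-direction.

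The main obstacle is making the "transverse intersection / adjunction" step rigorous when $X^v$ is only finite-codimensional (an ind-subvariety) rather than a genuine divisor or complete intersection: one cannot literally write a Koszul resolution. I would handle this by working entirely inside a fixed finite-dimensional Schubert variety $X_w$ containing $X^v_w$ — replacing $X^v$ by $X^v \cap X_w$, which is a closed subvariety of $X_w$ of finite codimension — and then either (i) use the explicit resolution $Z^{\mathfrak{v}}_{\mathfrak{w}}$ to transfer the computation, as sketched, or (ii) cite the known formula for $\omega_{X^v_w}$ in the finite case (e.g. via \cite{BrionLakshmibaiAGeometricApproachToStandardMonomialTheory}) and extend to the Kac-Moody setting by the same reduction to a large $X_w$ used throughout \cite{kum12}. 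A secondary subtlety is the sign/normalization of $\mathcal{L}(\lambda)$ (the paper uses the convention $\mathcal{L}(\lambda) = G \times^B \mathbb{C}_{-\lambda}$), so I would be careful to track that $\omega_{X_w}$ involves $\mathcal{L}(-2\rho)$ and not $\mathcal{L}(2\rho)$, since this is exactly what makes the $2\rho$-contributions cancel in $X^v_w$ and leaves only the reduced boundary $-\partial X^v_w$.
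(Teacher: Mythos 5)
There is a genuine gap: the two canonical-bundle formulas you feed into the adjunction step are incorrect, and the claimed cancellation of the $\mathcal{L}(\pm 2\rho)$ terms does not survive correct bookkeeping. The Ramanathan-type formula for a Schubert variety is $\omega_{X_w}\cong \mathcal{L}(-\rho)_{|X_w}\otimes\mathcal{O}_{X_w}(-\partial X_w)$, not $\mathcal{L}(-2\rho)_{|X_w}\otimes\mathcal{O}_{X_w}(-\partial X_w)$ (the weight $-2\rho$ appears only for $X_{w_0}=G/B$ itself, where in addition $\mathcal{O}(-\partial X_{w_0})\cong\mathcal{L}(-\rho)$); likewise $\omega_{X^v}\cong\mathcal{L}(-\rho)_{|X^v}\otimes\mathcal{O}_{X^v}(-\partial X^v)$ up to a character twist --- these are the paper's inputs \eqref{eq7.3.3} and \eqref{eq7.3.4}, due to Graham--Kumar and Kashiwara. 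Your justification for the sign flip on $X^v$ (that it is a $\dot w_0$-translate and $w_0\rho=-\rho$) is not valid: left translation by a group element does not change the isomorphism class of the underlying line bundle of the $G$-equivariant $\mathcal{L}(-\rho)$; it only changes the $T$-equivariant structure, which is exactly the $\mathbb{C}_{\rho}$ versus $\mathbb{C}_{-\rho}$ factors in the paper. With the correct formulas, the cancellation in the intersection formula $\omega_{X^v_w}\cong\bigl(\omega_{X^v}\otimes\omega_{X_w}\otimes\omega_{\bar X}^{-1}\bigr)_{|X^v_w}$ comes precisely from the ambient correction $\omega_{\bar X}^{-1}\cong\mathcal{L}(2\rho)$ (with $\bar X$ the thick flag variety); with your formulas and that same correction one would get $\mathcal{L}(2\rho)_{|X^v_w}\otimes\mathcal{O}(-\partial X^v_w)$, which is false. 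Your computation only closes because the wrong inputs and the omitted ambient term happen to compensate.

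Beyond the bookkeeping, the mechanism also needs more than you supply, and it differs from the paper's. Since $X^v$ and $X_w$ are singular and merely intersect properly, the paper does not argue by transversality on a resolution: it uses Cohen--Macaulayness to write $\omega_{X^v_w}\cong\ext^{\ell(v)}_{\mathcal{O}_{X_w}}\bigl(\mathcal{O}_{X^v_w},\omega_{X_w}\bigr)$, rewrites this by a depth argument as $\ext^{\ell(v)}_{\mathcal{O}_{\bar X}}\bigl(\mathcal{O}_{X^v},\mathcal{O}_{\bar X}\bigr)\otimes\omega_{X_w}$, identifies the first factor with $\omega_{X^v}\otimes\mathcal{L}(2\rho)$ via $\omega_{\bar X}\cong\mathcal{L}(-2\rho)$, and finishes with a local Tor-vanishing statement for the proper intersections; this works uniformly in the Kac--Moody case, where your route (i) is unavailable (there is no $w_0$, hence no $Z^{\mathfrak v}$ and no resolution of $X^v_w$ of BSDH type). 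Moreover, your ``crepant-type along the boundary divisors'' claim is asserted rather than proved: $m^{\mathfrak v}_{\mathfrak w}$ is only known to be an isomorphism over $C^v_w$, not at the generic points of the boundary divisors, so reading off coefficient $1$ there requires an actual computation (compare the discrepancy computation in \S\ref{sec3.5}, which uses the explicit section of $\mathcal{L}(2\rho)$). Your fallback (ii) is essentially to cite \cite{kum12}, which is what the paper does, but then the sketch should reproduce that argument correctly rather than the flawed cancellation above.
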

\begin{proof}
The finite case can be found in
\cite[Theorem 4.2.1(i)]{BrionLecturesOnGeometryOfFlagVarieties}.  The detailed
proof  for
the general symmetrizable Kac-Moody group can be found in
\cite[Lemma 8.5]{kum12}. We give a brief idea here.

Since $X^v_{w}$ is Cohen-Macaulay  by \cite[Proposition 5.6]{kum12} (in particular,
so is $X_w$) and the codimension of $X^v_{w}$ in $X_w$ is $\ell(v)$,
the dualizing sheaf
\begin{equation} \omega_{X^v_{w}}\simeq \ext^{\ell (v)}_{\co_{X_w}}
\bigl(\co_{X^v_w}, \omega_{X_w}\bigr),\label{eq7.3.1}
\end{equation}
(cf. \cite[Theorm 21.15]{E}).
Observing that
$\depth (\omega_{X_w})= \depth (\co_{X_w})$, as $\co_{X_w}$-modules,
(cf. \cite[Theorem 21.8]{E})
\begin{equation}
\ext^{\ell (v)}_{\co_{X_w}}
\bigl(\co_{X^v_w}, \omega_{X_w}\bigr)\simeq \ext^{\ell (v)}_{\co_{\bar{X}}}
\bigl(\co_{X^v}, \co_{\bar{X}}\bigr)\otimes_{\co_{\bar{X}}} \, \omega_{X_w},\label{eq7.3.2}
  \end{equation}
where $\bar{X}$ is the `thick' flag variety.
By \cite [Proposition 2.2]{GrahamKumarOnPositivityInTEquivariantKTheory},
as $T$-equivariant sheaves,
\begin{equation}
\omega_{X_w}\simeq \bc_{-\rho}\otimes \mathcal{L}(-\rho)\otimes
\co_{X_w}(-\partial X_w).\label{eq7.3.3}
\end{equation}
Similarly, by \cite[Theorem 10.4]{kum12} (due to Kashiwara),
\begin{equation}
\omega_{X^v}\simeq \bc_{\rho}\otimes \mathcal{L}(-\rho)\otimes
\co_{X^v}(-\partial X^v).\label{eq7.3.4}
\end{equation}
Similar to the identity \eqref{eq7.3.1}, we also have
\begin{equation} \omega_{X^v}\simeq \ext^{\ell (v)}_{\co_{\bar{X}}}
\bigl(\co_{X^v}, \omega_{\bar{X}}\bigr),\label{eq7.3.5}
\end{equation}
Since $\omega_{\bar{X}} \simeq  \mathcal{L}(-2\rho)$,
combining the isomorphisms \eqref{eq7.3.1} -
\eqref{eq7.3.5}, we get
$$\omega_{X^v_{w}}\simeq \co_{X^v}(-\partial X^v)\otimes_{\co_{\bar{X}}}\,
\co_{X_w}(-\partial X_w).$$
Now, the lemma follows since all the intersections $X^v\cap X_w$,
$(\partial X^v) \cap X_w$, $ X^v \cap \partial X_w$ and $ \partial (X^v) \cap
 \partial X_w$ are proper. In fact, we need the corresponding local Tor vanishing
 result (cf. \cite[Lemma 5.5]{kum12}).
\end{proof}

We are now ready to prove Theorem \ref{thm2.2} in the finite case.  The basic strategy is similar to the proof that toric varieties have KLT singularities (and in fact are log Fano) \cite[Section 11.4]{CoxLittleSchenckToricVarieties}.

\setcounter{subsection}{4}
\subsection{Proof of Theorem \ref{thm2.2} in the finite
  case}\label{sec3.5}

Let us denote $Z^{\mathfrak{v}}\times_{X}Z_{\mathfrak{w}}$ by
$Z^{\mathfrak{v}}_{\mathfrak{w}}$. Consider the desingularization
$$m^{\mathfrak{v}}_{\mathfrak{w}}:Z^{\mathfrak{v}}_{\mathfrak{w}}\to
X^{v}_{w},$$ as in \S\ \ref{sec3.2}.

By \cite[Proposition 2.2.2]{BrionKumarFrobeniusSplitting}, the canonical line bundle of
$Z_{\mathfrak{w}}$ is isomorphic with the line bundle
$$
\mathcal{L}_{\mathfrak{w}}(-\rho)\otimes
\mathcal{O}_{Z_{\mathfrak{w}}}[-\partial Z_{\mathfrak{w}}],
$$
where $\mathcal{L}_{\mathfrak{w}}(-\rho)$ is the pull-back of the
line bundle $\mathcal{L}(-\rho)$ to $Z_{\mathfrak{w}}$ via the
morphism $m_{\mathfrak{w}}$ and $\partial Z_{\mathfrak{w}}$ is the
reduced divisor $Z_{\mathfrak{w}}\backslash C_{w}$.

Similarly, the canonical line bundle of $Z^{\mathfrak{v}}$ is
isomorphic with
$$\mathcal{L}_{\mathfrak{v}}(-\rho)\otimes
\mathcal{O}_{Z^{\mathfrak{v}}}[-\partial Z^{\mathfrak{v}}],$$
 where
$\mathcal{L}_{\mathfrak{v}}(-\rho)$ is the pull-back of the line
bundle $\mathcal{L}(-\rho)$ to $Z^{\mathfrak{v}}$ via
$m^{\mathfrak{v}}$ and $\partial Z^{\mathfrak{v}}$ is the reduced
divisor $Z^{\mathfrak{v}}\backslash C^{v}$.

Thus, by adapting the proof of \cite[Lemma 1]{BrionPositivityInGrothendieckGroupOfFlagVars},
\begin{align}
\omega_{Z^{\mathfrak{v}}_{\mathfrak{w}}} &\cong
  \omega_{Z^{\mathfrak{v}}}\otimes_{\mathcal{O}_{X}}\omega_{Z_{\mathfrak{w}}}
  \otimes_{\mathcal{O}_{X}}(m^{\mathfrak{v}}_{\mathfrak{w}})^{*}\omega^{-1}_{X}\notag\\
&= \mathcal{O}_{Z^{\mathfrak{v}}_{\mathfrak{w}}}[-\partial
    Z^{\mathfrak{v}}_{\mathfrak{w}}],\label{eq6}
\end{align}
where $\partial Z^{\mathfrak{v}}_{\mathfrak{w}}$ is the reduced
divisor
$$
(\partial Z^{\mathfrak{v}}\times_{X}Z_{\mathfrak{w}})\cup
(Z^{\mathfrak{v}}\times_{X}\partial Z_{\mathfrak{w}}).
$$

Consider the
desingularization
$$
m^{\mathfrak{v}}_{\mathfrak{w}}:Z^{\mathfrak{v}}_{\mathfrak{w}}\to
X^{v}_{w}.
$$
Note that $m^{\mathfrak{v}}_{\mathfrak{w}}$ is an isomorphism outside of $\partial
    Z^{\mathfrak{v}}_{\mathfrak{w}}$.

By Lemma \ref{lem3.4},
$$
K_{X^{v}_{w}}=-\sum X_{i},
$$
where we have written $\partial X^{v}_{w}=\cup X_{i}$ as the union of prime divisors.
 Thus, by \eqref{eq5},
\begin{equation}
K_{X_{w}^{v}}+\Delta=-\frac{1}{N}\sum b_{i}X_{i},\label{eq7}
\end{equation}
which is a $\mathbb{Q}$-Cartier divisor by \eqref{eq4}.

We next calculate $\Exc(m^{\mathfrak{v}}_{\mathfrak{w}})$ and the proper
transform $\Delta'$ of $\Delta$ under the desingularization
$m^{\mathfrak{v}}_{\mathfrak{w}}:Z^{\mathfrak{v}}_{\mathfrak{w}}\to
X^{v}_{w}$.

Any irreducible component $X_{i}$ of $\partial X^{v}_{w}$ is of the
form $X^{v'}_{w}$ or $X^{v}_{w'}$ for some $v\to v'$ and $w'\to w$,
where the notation $w'\to w$ means that $\ell(w)=\ell(w')+1$ and
$w=s_{\alpha}w'$, for some reflection $s_{\alpha}$ through a positive
(not necessarily simple) root $\alpha$. Conversely, any $X^{v'}_{w}$
and $X^{v}_{w'}$ (for $v\to v'$ and $w'\to w$) is an irreducible
component of $\partial X^{v}_{w}$. Thus, we have the prime
decomposition
$$
\partial X^{v}_{w}=\left(\cup_{v\to v'}X^{v'}_{w}\right)\cup
\left(\cup_{w'\to w}X^{v}_{w'}\right).
$$

We define $Z_{i}$ as the prime divisor (of
the resolution $Z^{\mathfrak{v}}_{\mathfrak{w}}$)
$Z^{\mathfrak{v}'}\times_{X} Z_{\mathfrak{w}}$ if $X_{i}=X^{v'}_{w}$
or $Z^{\mathfrak{v}}\times_{X}Z_{\mathfrak{w}'}$ if
$X_{i}=X^{v}_{w'}$. Thus, the strict transform of $\Delta$ can be written as
\begin{equation}
\Delta'=\sum_{i}\left(1-\frac{b_{i}}{N}\right)Z_{i}.\label{eq8}
\end{equation}

We now calculate $E=E_{m^{\mathfrak{v}}_{\mathfrak{w}}}(\Delta)$.
By definition (cf. \eqref{eq1}),
\begin{equation}
E=(K_{Z^{\mathfrak{v}}_{w}}+\Delta')-
\frac{1}{N}(m^{\mathfrak{v}}_{\mathfrak{w}})^{*}(N(K_{X^{v}_{w}}+\Delta)).
\label{eq9}
\end{equation}
Consider the prime decomposition of the reduced divisor $\partial
Z^{\mathfrak{v}}_{\mathfrak{w}}$:
\begin{equation}
\partial
Z^{\mathfrak{v}}_{\mathfrak{w}}=\left(\cup_{i}Z_{i}\right)\cup
\left(\cup_{j}Z'_{j}\right),\label{eq10}
\end{equation}
where $Z'_{j}$ are the irreducible components of $\partial
Z^{\mathfrak{v}}_{\mathfrak{w}}$ which are not of the form $Z_{i}$.
The line bundle $\mathcal{L}(2\rho)_{|X^{v}_{w}}$ has a section
vanishing exactly on the set $\partial X^{v}_{w}$. To see this,
consider the Borel-Weil isomorphism
$$
\beta:V(\rho)^{*}\to H^{0}(G/B,\mathcal{L}(\rho)),\quad
\beta(\chi)(gB)=[g,({g}^{-1}\chi)_{|\mathbb{C}v_{+}}],
$$
where $V(\rho)$ is the irreducible $G$-module with highest weight
$\rho$ and $v_{+}\in V(\rho)$ is a highest weight vector. Let $\chi_{v}$
be the unique (up to a scalar multiple) vector of $V(\rho)^{*}$ with
weight $-v \rho$. Now, take the section $\beta(\chi_{v})\cdot
\beta(\chi_{w})$ of the line bundle $\mathcal{L}(2\rho)_{|X^{v}_{w}}$.
Then, it has the zero set precisely equal to $\partial X^{v}_{w}$,
since the zero set $Z(\beta(\chi_{v})_{|X^{v}})$ of
$\beta(\chi_{v})_{|X^{v}}$ is given by
\begin{align*}
Z(\beta(\chi_{v})_{|X^{v}}) &= \{gB\in
X^{v}:\chi_{v}(gv_{+})=0\}\\[3pt]
&= \bigcup_{v'>v}B^{-}v'B/B\\[3pt]
&= \partial X^{v}.
\end{align*}
We fix $H = \sum_i b_i X_i$ to be the divisor corresponding to
the section  $\beta(\chi_{v})\cdot
\beta(\chi_{w})$ of the line bundle $\mathcal{L}(2\rho)_{|X^{v}_{w}}$
as in \eqref{eq4}.  Observe  that the coefficients of $m^{\mathfrak{v}}_{\mathfrak{w}}{}^{*} H = \sum_i b_i Z_i + \sum_j d_j Z_j'$ are all strictly positive integers.

Thus, by combining the identities \eqref{eq4},
\eqref{eq6}--\eqref{eq10}, we get
\begin{align*}
E &=
-\sum_{i}\frac{b_{i}}{N}{Z}_{i}-\sum_{j}Z'_{j}+\frac{1}{N}
m^{\mathfrak{v}}_{\mathfrak{w}}{}^{*}(H)\\[3pt]
&=
-\sum_{i}\frac{b_{i}}{N}Z_{i}-\sum_{j}Z'_{j}+\frac{1}{N}\sum_{i}b_{i}Z_{i}+\frac{1}{N}\sum
d_{j}Z'_{j},\\[3pt]
&= \sum_{j}\left(\frac{d_j}{N}-1\right)Z'_{j},
\end{align*}
for some $d_{j}\in \mathbb{N}$ (since the zero set of a certain
section of $\mathcal{L}(2\rho)_{|X^{v}_{w}}$ is precisely equal to
$\partial X^{v}_{w}$ and all the $Z_j'$ lie over $\partial X^v_w$). Thus, the coefficient $e_{j}$ of $Z'_{j}$ in
$E$ satisfies $-1<e_{j}$.

Finally, observe that $\Exc(m^{\mathfrak{v}}_{\mathfrak{w}})+\Delta'$
is a $\Q$-divisor with simple normal crossings since $\Supp\,
(\Exc(m^{\mathfrak{v}}_{\mathfrak{w}})+\Delta')\subset \partial
Z^{\mathfrak{v}}_{\mathfrak{w}}$ and the latter is a simple normal
crossing divisor since
$$\partial
Z^{\mathfrak{v}}_{\mathfrak{w}} =
Z^{\mathfrak{v}}_{\mathfrak{w}} \setminus C^v_w,$$
(cf. $\S$\ref{sec3.2}).

This completes the proof of Theorem \ref{thm2.2} in the finite case.\hfill$\Box$

\begin{remark}
The above proof crucially uses the explicit BSDH  type resolution of the
Richardson varieties $X^v_w$ given in $\S$\ref{sec3.2}. This resolution is
available in the finite case,
but we are not aware of such an explicit resolution in the Kac-Moody case. This is
the main reason that we need to handle the general Kac-Moody case differently.

\end{remark}

\section{Proof of Theorem \ref{thm2.2} in the Kac-Moody
  case}\label{sec4}

Our proof of Theorem \ref{thm2.2} in the general Kac-Moody case is
more involved. It requires the use of characteristic $p$ methods; in
particular, the Frobenius splitting.

For the construction of the flag variety $X=G/B$, Schubert subvarieties $X_w$,
opposite Schubert subvarieties $X^v$ (and thus the Richardson varieties $X^v_w$)
associated to any Kac-Moody group $G$ over an algebraically closed field $k$, we refer to \cite{tits1}, \cite{tits2}, \cite{tits3},
\cite{mat88} and \cite{MathieuConstructionDunGroupeDeKacMoody}.

Let $k$ be an algebraically closed field of
characteristic  $p>0$. Let $Y: Y_0 \subset Y_1\subset \dots $ be an ind-variety
over $k$ and let $\mathcal{O}_{Y}$ be its structure sheaf
(cf.  \cite[Definition 4.1.1]{KumarKacMoodyGroupsBook}). The
{\it absolute Frobenius morphism}
$$
F_Y:Y\longrightarrow Y
$$
is the identity on the underlying space of $Y$, and the $p$-th power
map on the structure sheaf $\Ocal_Y$. Consider the
$\mathcal{O}_{Y}$-linear Frobenius map
\[F^\#: \mathcal{O}_{Y} \to F_*\mathcal{O}_{Y}, f\mapsto f^p.\]

Identical to the definition of Frobenius split varieties, we have the following
 definition for ind-varieties.

\begin{definition}
\label{indsplit} An ind-variety $Y$ is called {\it Frobenius split} (or just {\it split}) if
the $\Ocal_Y$-linear map $F^{\#}$
splits, i.e., there exists an $\Ocal_Y$-linear map
$$
\varphi:F_*\Ocal_Y \longrightarrow \Ocal_Y
$$
such that the composition $\varphi\circ F^{\#}$ is the identity of
$\Ocal_Y$. Any such $\varphi$ is called a {\it splitting}.

A closed ind-subvariety $Z$ of $Y$ is {\it compatibly split} under the
 splitting $\varphi$ if
$$
\varphi(F_*\Ical_Z)\subseteq \Ical_Z,
$$
where $\Ical_Z \subset \Ocal_Y$ is the ideal sheaf of $Z$.

Clearly, a splitting of $Y$ is equivalent to a sequence of splittings
$\varphi_n$ of $Y_n$ such that $\varphi_n$ compatibly splits  $Y_{n-1}$ inducing
the splitting $\varphi_{n-1}$ on $Y_{n-1}$.
\end{definition}

Let $B$ be the standard Borel subgroup of any Kac-Moody group $G$
over an algebraically closed field $k$ of
characteristic $p>0$ and $T\subset B$ the standard maximal torus. For any real root $\beta$,
let $U_\beta$ be the corresponding root subgroup.  Then, there exists an
algebraic group isomorphism $\veps_\beta: \bg_a \to U_{\beta}$ satisfying
  \[
t \veps_\beta (z)t^{-1} = \veps_{\beta}(\beta(t)z),
  \]
for $z\in \bg_a$ and $t\in T$. For any $B$-locally finite algebraic representation
 $V$ of $B$, $v\in V$ and $z\in \bg_a$,
  \[
{\veps}_\beta (z)\, v = \sum_{m\geq 0} z^m \bigl( e^{(m)}_{\beta} \cdot
v\bigr)  ,
  \]
where $e^{(m)}_{\beta}$ denotes the $m$-th divided power of the root vector
$e_{\beta}$, which is, by definition, the derivative of ${\veps}_\beta $ at $0$.

Now, we come to the definition of $B$-canonical splittings for ind-varieties
(cf. \cite[Section 4.1]{BrionKumarFrobeniusSplitting} for more details in the finite case).

\begin{definition}
\label{canonical} Let $Y$ be a $B$-ind-variety, i.e., $B$ acts on the ind-variety
$Y$ via
ind-variety isomorphisms.
   Let $\End_F(Y) := \Hom (F_*\co_Y,\co_Y)$ be
the additive group of all the $\co_Y$-module maps $F_*\co_Y \to \co_Y$.
Recall that   $F_*\co_Y$ can canonically be identified with $\co_Y$
as a sheaf of abelian groups on $Y$; however,
the $\co_Y$-module structure is given by $f\odot g := f^pg$, for $f,g\in
\co_Y$.  Since $Y$ is a  $B$-ind-variety, $B$ acts on
$\End_F(Y)$ by
  \[
(b*\psi)s = b(\psi(b^{-1}s)),
\;\text{ for }\; b\in B, \psi\in\End_F(Y)\;\text{and} \; s\in F_*\co_Y,
  \]
where the action of $B$ on $F_*\co_Y$ is defined to be the standard action
of $B$ on $\co_Y$ under the identification $F_*\co_Y = \co_Y$ (as sheaves
of abelian groups). We define the
$k$-linear structure on $\End_F(Y)$ by
 \[
(z*\psi )s = \psi (zs) = z^{1/p} \psi(s),\]
 for $z\in k, \psi\in \End_F(Y)$ and $s\in\co_Y$.

A splitting $\phi\in \End_F(Y)$  is called a $B$-{\it
canonical splitting} if $\phi$ satisfies the following conditions:
\begin{enumerate}
\item[(a)] $\phi$ is $T$-invariant, i.e.,
  \[
t*\phi = \phi, \quad\text{ for all }t\in T.
  \]
\item[(b)] For any simple root $\al_i$, $1\leq i\leq \ell$, there exist
$\phi_{i,j}\in\End_F(X)$, $0\leq j\leq p-1$, such that
  \beqn
\veps_{\al_i}(z)*\phi = \sum^{p-1}_{j=0} z^j*\phi_{i,j},\quad\text{ for
all }z\in \bg_a .
  \eeqn
\end{enumerate}

  \end{definition}
The definition of $B^-$-canonical is of course parallel.

Before we come to the proof of Theorem \ref{thm2.2} for the Kac-Moody
case, we need the following results.

The following result in the symmetrizable Kac-Moody case is due to O. Mathieu
(unpublished). (For a proof in the finite case, see
\cite[Theorem 2.3.2]{BrionKumarFrobeniusSplitting}.)
Since Mathieu's proof is unpublished, we briefly give an outline of his proof
contained in \cite{mat12}.

\begin{proposition}\label{prop4.1}
Consider the Richardson variety $X^{v}_{w}(k)$ (for
any $v\leq w$) over an algebraically closed field $k$ of
characteristic $p>0$. Then, $X^{v}_{w}(k)$ is Frobenius split compatibly
splitting its boundary $\partial X^{v}_{w}$.
\end{proposition}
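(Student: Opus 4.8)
The plan is to prove Proposition~\ref{prop4.1} via the theory of $B$- and $B^-$-canonical splittings on the flag ind-variety $X=G/B$, following Mathieu's strategy. First, I would recall (or establish) that $X=G/B$ admits a $B$-canonical splitting $\phi$; in the symmetrizable Kac-Moody setting this is constructed from a splitting of the big cell, or equivalently by exhibiting a splitting compatible with the $T$-action and transforming as in Definition~\ref{canonical}(b) under each $\varepsilon_{\alpha_i}$. The key formal property of $B$-canonical splittings, which I would isolate as a lemma, is that they behave well under the $B$-action: if $\phi$ compatibly splits a closed ind-subvariety $Z$, then it also compatibly splits $b\cdot Z$ for $b\in B$, and hence the scheme-theoretic intersection of all $B$-translates of $Z$ (equivalently, the irreducible components and intersections of any $B$-stable configuration one builds from $Z$). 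Applying this to the Schubert varieties, one gets that the canonical splitting $\phi$ of $X$ compatibly splits every $X_w$ and every boundary $\partial X_w=\bigcup_{w'\to w}X_{w'}$, and moreover compatibly splits all the $X_{w'}$ simultaneously.

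Next I would run the parallel construction on the opposite side. The opposite Schubert varieties $X^v$ are $B^-$-stable, and one has a $B^-$-canonical splitting $\phi^-$ of $X$ compatibly splitting all $X^v$ and their boundaries $\partial X^v=\bigcup_{v\to v'}X^{v'}$. The crucial point is to produce a \emph{single} splitting of $X$ that is simultaneously compatible with the full Schubert configuration on both sides. Here I would invoke the uniqueness/rigidity of the canonical splitting: a $B$-canonical splitting of $G/B$ is unique (it is pinned down by $T$-invariance together with the $\varepsilon_{\alpha_i}$-transformation rule), and one checks that the $B$-canonical $\phi$ is in fact also $B^-$-canonical — or, more directly, one shows the canonical splitting arising from the section $\beta(\chi_v)\cdot\beta(\chi_w)$-type construction (a section of $\mathcal{L}(2\rho)$ vanishing on the relevant boundary divisors, exactly as in~\S\ref{sec3.5}) splits $X$ compatibly with $X_w$, $X^v$, $\partial X_w$ and $\partial X^v$ all at once. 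Since compatible splitting is inherited by scheme-theoretic intersections, $\phi$ then restricts to a Frobenius splitting of $X^v_w=X^v\cap X_w$ that compatibly splits $(\partial X^v)\cap X_w$ and $X^v\cap(\partial X_w)$, hence their union $\partial X^v_w$.

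One technical subtlety to address: all of this must be carried out at the ind-variety level, i.e.\ as a compatible system of splittings $\varphi_n$ on the finite-dimensional approximations $X_n$, as in Definition~\ref{indsplit}; but $X_w$, $X^v$ and $X^v_w$ sit inside a fixed finite-dimensional $X_n$ (since $X_w$ is finite-dimensional and $X^v$ is of finite codimension, $X^v_w$ is finite-dimensional and $B$-stable modulo a finite-dimensional ambient piece), so after passing to a suitable $X_n$ containing everything, the argument reduces to the classical finite-dimensional statements of \cite[Chapter 2, 4]{BrionKumarFrobeniusSplitting}. I expect the main obstacle to be exactly the construction and uniqueness of the canonical splitting in the general symmetrizable Kac-Moody case — ensuring it exists, is unique, and is simultaneously $B$- and $B^-$-canonical so that it splits the two-sided Schubert configuration — since in the Kac-Moody setting one does not have the compact group / unitary trick available in the finite case and must instead work with the explicit section of $\mathcal{L}(2\rho)$ on the thick flag variety and verify the required local transformation properties; this is the content Mathieu supplies in \cite{mat12}, and I would follow his outline there.
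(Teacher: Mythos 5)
Your overall strategy coincides with the paper's (and Mathieu's): produce one splitting of the ind-variety $X=G/B$ which is simultaneously $B$- and $B^-$-canonical, deduce that it compatibly splits all $X_w$, all $X^v$ and their boundaries, and then intersect to split $X^v_w$ together with $\partial X^v_w$ (which is a union of smaller Richardson varieties). However, there is a genuine gap in the way you propose to execute the opposite-side compatibility. Your reduction ``pass to a suitable finite-dimensional $X_n$ containing everything and quote the classical finite-dimensional statements of \cite{BrionKumarFrobeniusSplitting}'' does not work: $X^v$ has finite \emph{codimension} in the infinite-dimensional $X$, so it is itself infinite-dimensional and is contained in no $X_n$; moreover the filtration pieces $X_n$ (finite unions of Schubert varieties) are $B$-stable but not $B^-$-stable and are not flag varieties of finite-dimensional groups, so the classical results about canonical splittings of $G/B$ do not apply to them. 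The $B^-$-side must be handled at the level of the ind-variety itself, which is exactly where the new content lies.

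Concretely, the step you assert without proof --- that the canonical splitting compatibly splits every $X^v$ --- is established in the paper by a three-part argument: (i) the $B$-canonical splitting of $X$, glued from the $(\sigma\theta)^{p-1}$ splittings of the BSDH varieties $Z_{\mathfrak w}$ via uniqueness of $B$-canonical splittings, is automatically $B^-$-canonical (the proof of \cite[Proposition 4.1.10]{BrionKumarFrobeniusSplitting} carries over); (ii) each $T$-fixed point $\dot wB$ is compatibly split, because $T$-invariance of a canonical splitting forces it to preserve the grading of $k[U_w\dot wB/B]$ by $T$-weights, which lie in a strictly convex cone with only constant invariants; (iii) the orbit-closure lemma \cite[Proposition 4.1.8]{BrionKumarFrobeniusSplitting}, extended to ind-varieties, then shows that $X^w=\overline{B^-\dot wB/B}$ is compatibly split. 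Your ``key lemma'' about compatibility under translates $b\cdot Z$ is not the statement that does this work (and even on the $B$-side it gives nothing unless some seed subvariety, e.g.\ a $T$-fixed point, is already known to be compatibly split); it is the orbit-closure property applied to a compatibly split $T$-fixed point that produces $X^v$, and this step cannot be confined to any finite-dimensional piece since $B^-$-orbit closures escape every $X_n$. With steps (i)--(iii) supplied, the rest of your argument (intersections of compatibly split ind-subvarieties are compatibly split, and $\partial X^v_w$ is a union of Richardson varieties) is correct and matches the paper.
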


\begin{proof}

\medskip
\noindent
{\bf Assertion I:~ The full flag variety {\boldmath$X=X(k)$} admits a
  {\boldmath$B$}-canonical splitting.}
\smallskip

For any $w\in W$ and any reduced decomposition $\mathfrak{w}$ of
$w$, consider the BSDH desingularization $Z_{\mathfrak{w}}=Z_{\mathfrak{w}}(k)$
of the Schubert variety $X_w$ and the  section $\sigma \in
H^{0}(Z_{\mathfrak{w}},\mathcal{O}_{Z_{\mathfrak{w}}}[\partial Z_{\mathfrak{w}}])$
with the associated divisor of zeroes $(\sigma)_{0}=\partial
Z_{\mathfrak{w}}$. Clearly, such a section is unique (up to a nonzero
scalar multiple). Take the unique, up to nonzero multiple, nonzero section $\theta\in
H^{0}(Z_{\mathfrak{w}},\mathcal{L}_{\mathfrak{w}}(\rho))$ of weight $-\rho$.
(Such a section exists since $H^{0}(Z_{\mathfrak{w}},\mathcal{L}_{\mathfrak{w}}
(\rho)) \to H^0(\{1\}, \mathcal{L}_{\mathfrak{w}}(\rho)_{|\{1\}})$ is surjective
by \cite[Theorem 3.1.4]{BrionKumarFrobeniusSplitting}, where
$1:=[1, \dots, 1]\in Z_{\mathfrak{w}}$ and
$[1, \dots, 1]$ denotes the $B^{\ell(w)}$-orbit of $(1, \dots, 1)$ as in
\cite[Definition 2.2.1]{BrionKumarFrobeniusSplitting}. Moreover, such a section is
 unique
up to a scalar multiple since $H^{0}(Z_{\mathfrak{w}},\mathcal{L}_{\mathfrak{w}}
(\rho))^*
\simeq H^{0}(X_w,\mathcal{L}(\rho)_{|X_w})^*\hookrightarrow V(\rho)$.)  By the
above, the section $\theta$ does
not vanish at the base point $1$.
Thus, by
\cite[Proposition 1.3.11 and Proposition 2.2.2]{BrionKumarFrobeniusSplitting},
$(\sigma\theta)^{p-1}$ provides a splitting
$\hat{\sigma}_{\mathfrak{w}}$ of $Z_{\mathfrak{w}}$ compatibly
splitting $\partial Z_{\mathfrak{w}}$. Since the Schubert variety
$X_{w}$ is normal, the splitting $\hat{\sigma}_{\mathfrak{w}}$
descends to give a splitting $\hat{\sigma}_{w}$ of $X_{w}$
compatibly splitting all the Schubert subvarieties of $X_w$.

Now, the splitting $\hat{\sigma}_{\mathfrak{w}}$ is $B$-canonical and
it is the unique $B$-canonical splitting of $Z_{\mathfrak{w}}$
(cf. \cite[Exercise 4.1.E.2]{BrionKumarFrobeniusSplitting}; even though this exercise is for
finite dimensional $G$, the same proof works for the Kac-Moody
case). We claim that the induced splittings $\hat{\sigma}_{w}$ of
$X_{w}$ are compatible to give a splitting of
$X=\cup_{w}X_{w}$. Take $v$, $w\in W$ and choose  $u\in W$
with $v\leq u$ and $w\leq u$. Choose a reduced word $\mathfrak{u}$ of
$u$. Then there is a reduced subword $\mathfrak{v}$
(resp. $\mathfrak{w}$) of $\mathfrak{u}$ corresponding to $v$ (resp. $w$). The
$B$-canonical splitting $\hat{\sigma}_{\mathfrak{u}}$ of
$Z_{\mathfrak{u}}$ (by the uniqueness of the $B$-canonical splittings
of $Z_{\mathfrak{v}}$) restricts to the $B$-canonical splitting
$\hat{\sigma}_{\mathfrak{v}}$ of $Z_{\mathfrak{v}}$ (and
$\hat{\sigma}_{\mathfrak{w}}$ of $Z_{\mathfrak{w}}$). In particular,
the splitting $\hat{\sigma}_{u}$ of $X_{u}$ restricts to the splitting
$\hat{\sigma}_{v}$ of $X_{v}$ (and $\hat{\sigma}_{w}$ of
$X_{w}$). This proves the assertion that the splittings
$\hat{\sigma}_{u}$ of $X_{u}$ are compatible to give a $B$-canonical
splitting $\hat{\sigma}$ of $X$. By the same proof as that of
\cite[Proposition 4.1.10]{BrionKumarFrobeniusSplitting}, we obtain that the $B$-canonical
splitting $\hat{\sigma}$ of $X$ is automatically $B^{-}$-canonical.

\medskip
\noindent
{\bf Assertion II:~ The splitting {\boldmath$\hat{\sigma}$} of
  {\boldmath$X$} canonically  splits the {\boldmath$T$}-fixed points
  of {\boldmath$X$}.}
\smallskip

Take a $T$-fixed point $\dot{w}B\in X$ (for some $w\in W$) and
consider the Schubert variety $X_{w}$. Then, $\dot{w}B\in X_{w}$ has an
affine open neighborhood $U_{w}\simeq U_{w}\cdot \dot{w}B/B$,
where $U_{w}$ is the unipotent subgroup of $G$ with Lie algebra
$\displaystyle{\mathop{\oplus \mathfrak{g}_{\alpha}}_{\alpha\in
    R^{+}\cap w R^{-}}}$. In particular, the ring $k[U_{w}\dot{w}B/B]$
of regular functions, as a $T$-module, has weights lying in the cone
$\sum\limits_{\alpha\in R^{-}\cap wR^{+}}\mathbb{Z}_{+}\alpha$ and the
$T$-invariants $k[U_{w}\dot{w}B/B]^{T}$ in $k[U_{w}\dot{w}B/B]$ are
only the constant functions. Since any $B$-canonical splitting is
$T$-equivariant by definition, it takes the $\lambda$-eigenspace
$
k[U_{w}\dot{w}B/B]_{\lambda}$ to
$k[U_{w}\dot{w}B/B]_{\lambda/p}
$
(cf. \cite[\S4.1.4]{BrionKumarFrobeniusSplitting}). This shows that
the ideal of
$\{\dot{w}\}$ in $k[U_{w}\dot{w}B/B]$ is stable under
$\hat{\sigma}$. Thus $\{\dot{w}\}$ is compatibly split under
$\hat{\sigma}$.

\medskip
\noindent
{\bf Assertion III:~ {\boldmath$X^{w}$} is compatibly split under
  {\boldmath$\hat{\sigma}$}}.
\smallskip

Since $\hat{\sigma}$ is $B^{-}$-canonical, by
\cite[Proposition 4.1.8]{BrionKumarFrobeniusSplitting}, for any
closed ind-subvariety $Y$ of $X$ which is compatibly
split under $\hat{\sigma}$, the $B^{-}$-orbit closure
$\overline{B^{-}Y}\subset X$ is also compatibly split. In
particular, the opposite Schubert variety
$X^{w}:=\overline{B^{-}\dot{w}B/B}$ is compatibly split.

Thus, we get that the Richardson varieties $X^{v}_{w}$ (for $v\leq w$)
are compatibly split under the splitting $\hat{\sigma}$ of $X$.  Since the
 boundary $\partial X^{v}_{w}$ is a union of other Richardson varieties,
 the boundary also is compatibly split. This
proves the proposition.
\end{proof}

We need the following general result. First we recall a definition.

\begin{definition}
\label{def.SharplyFPure}
Suppose that $X$ is a normal variety over an algebraically closed field of
characteristic $p>0$ and $D$ is an effective $\mathbb{Q}$-divisor
 on $X$.  The pair $(X,D)$ is called {\em sharply $F$-pure} if, for every point $x \in X$, there exists an integer $e \geq 1$ such that
$e$-iterated Frobenius map $$\mathcal{O}_{X, x}\to
F^{e}_{*}\big(\mathcal{O}_{X,x}(\lceil (p^{e}-1)D \rceil) \big)$$ admits an
$\mathcal{O}_{X,x}$-module splitting.  In fact, if there exists a splitting for
one $e > 0$, by composing maps, we obtain a Frobenius splitting for all sufficiently divisible $e > 0$.
\end{definition}

Note that by definition, if $\cO_X \to F^e_* \cO_X$ is split relative to a divisor
$D$, then the pair $(X, {1 \over p^e - 1} D)$ is sharply $F$-pure.

Note that being sharply $F$-pure is a purely local condition, unlike being $F$-split.

\begin{proposition}\label{prop4.3}
Let $X$ be an irreducible normal variety over a field of characteristic $p > 0$ and $D=\sum\limits_{i}D_{i}$
a reduced divisor in $X$.  
Assume further that $X$ is Frobenius split compatibly splitting $\Supp
D$. Then, the pair $(X,D)$ is sharply $F$-pure.
\end{proposition}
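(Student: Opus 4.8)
The plan is to exhibit a single $\cO_X$-linear splitting of the twisted Frobenius map $\cO_X\to F_*(\cO_X((p-1)D))$; by the Note following Definition~\ref{def.SharplyFPure}, applied with $e=1$ and using that $\lceil (p-1)D\rceil=(p-1)D$ since $D$ is integral, this is exactly the assertion that $(X,\tfrac{1}{p-1}(p-1)D)=(X,D)$ is sharply $F$-pure. (Sharp $F$-purity is a local condition, so it would suffice to do this locally, but the splitting we construct is global.)

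First I would invoke the standard correspondence between Frobenius-linear maps and divisors on the normal ($F$-finite) variety $X$. Grothendieck duality for the finite morphism $F$ gives an $\cO_X$-module isomorphism
$$\Hom_{\cO_X}(F_*\cO_X,\cO_X)\;\cong\;F_*(\cO_X((1-p)K_X)),$$
and, more generally, $\Hom_{\cO_X}(F_*(\cO_X((p-1)D)),\cO_X)\cong F_*(\cO_X((1-p)(K_X+D)))$; here both sides are reflexive and the isomorphisms are obtained by comparison on the smooth locus, whose complement has codimension $\ge 2$. Under the first isomorphism the given Frobenius splitting $\varphi$ of $X$ corresponds, after normalizing so that $\varphi(1)=1$, to an effective divisor $D_\varphi\sim(1-p)K_X$.

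The crux of the argument is the inequality $D_\varphi\ge(p-1)D$, which is the only place the hypothesis that $\varphi$ compatibly splits $\Supp D$ is used. It is checked at the generic point $\eta_i$ of each prime component $D_i$ of $D$: there $X$ is regular (a normal variety is regular in codimension one) and $\Supp D$ localizes to the divisor $\{t=0\}$ for $t$ a uniformizer of the discrete valuation ring $\cO_{X,\eta_i}$. A direct computation in this DVR --- or the corresponding statement in \cite[\S 1.3]{BrionKumarFrobeniusSplitting} --- shows that compatibility of $\varphi$ with $\{t=0\}$ forces the coefficient of $D_i$ in $D_\varphi$ to be at least $p-1$. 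Since $D=\sum_i D_i$ is reduced, we conclude $D_\varphi\ge(p-1)D$, so we may write $D_\varphi=(p-1)D+D'$ with $D'\ge 0$ and $D'\sim(1-p)(K_X+D)$.

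Finally I would use $D'$ to promote $\varphi$ to the desired splitting. Under the two isomorphisms above, restricting a map along the natural inclusion $\iota\colon F_*\cO_X\hookrightarrow F_*(\cO_X((p-1)D))$ corresponds to the inclusion of effective divisors $E\mapsto E+(p-1)D$ (obtained by comparing the divisor classes $(1-p)(K_X+D)$ and $(1-p)K_X$). Hence the effective divisor $D'$ determines a map $\psi\colon F_*(\cO_X((p-1)D))\to\cO_X$ with $\psi\circ\iota=\varphi$. (Concretely, over an open set where $D$ is principal with local equation $f$, $\psi$ is $\varphi$ precomposed with division by $f^{p-1}$; normality glues these, while the divisor description makes the gluing automatic and accommodates the case where $D$ is not $\bQ$-Cartier.) Then $\psi(1)=\psi(\iota(1))=\varphi(1)=1$, and since the twisted Frobenius $\cO_X\to F_*(\cO_X((p-1)D))$ is precisely $\iota\circ F^{\#}$, we obtain $\psi\circ(\text{twisted Frobenius})=\varphi\circ F^{\#}=\operatorname{id}_{\cO_X}$. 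Thus the twisted Frobenius splits and $(X,D)$ is sharply $F$-pure. The main obstacle is the inequality $D_\varphi\ge(p-1)D$: it rests on the local analysis (or citation) of the divisor attached to a Frobenius splitting, and one must set up the divisor--$\Hom$ dictionary with reflexive sheaves rather than invertible ones since $D$ need not be $\bQ$-Cartier; the remaining steps are formal manipulations of Frobenius-linear maps.
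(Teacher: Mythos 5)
Your proof is correct, but it takes a genuinely different route from the paper. The paper's argument is a two-line trick: compatibility means $\varphi$ restricts to a splitting of $F^{\#}$ on $\Ical_D=\cO_X(-D)$, and twisting that split map back by $\cO_X(D)$ (projection formula, then localize at any stalk) immediately produces a splitting of $\cO_X\to F_*\cO_X((p-1)D)$ with $e=1$; no duality, no divisor attached to $\varphi$, no reflexive-sheaf bookkeeping. You instead run the splitting through the correspondence $\Hom_{\cO_X}(F_*\cO_X,\cO_X)\cong F_*\cO_X((1-p)K_X)$, prove $D_\varphi\ge (p-1)D$ by the DVR computation at the generic points of the $D_i$ (that computation is right: for a splitting compatible with $\{t=0\}$ the coefficient must be at least $p-1$), and then extend $\varphi$ across the twist using $D'=D_\varphi-(p-1)D\ge 0$. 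This buys you more than the statement asks for --- you actually identify an effective $\bQ$-divisor $\Delta_\varphi=\frac{1}{p-1}D_\varphi\ge D$ with $(p-1)(K_X+\Delta_\varphi)\sim 0$, which is exactly the kind of data the paper only extracts later, inside the proof of Theorem \ref{thm4.5} --- but it costs you extra hypotheses and machinery: the $\Hom$--divisor dictionary requires $F$ to be a finite morphism ($F$-finiteness), which holds over a perfect (in particular algebraically closed) base field as in the paper's application but is not literally granted by ``a field of characteristic $p>0$,'' whereas the paper's restrict-and-twist argument needs nothing of the sort. Two small cosmetic points: $\varphi(1)=1$ is automatic for a splitting, so no normalization is needed; and the citation for ``compatible $\Rightarrow$ coefficient $\ge p-1$'' is better matched by the exercises following \cite[\S 1.3]{BrionKumarFrobeniusSplitting} or the $F$-adjunction literature than by Proposition 1.3.11 itself, which goes in the converse direction --- though your self-contained DVR check makes this moot.
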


\begin{proof}
Note that we have a global splitting of $\cO_X(-D) \to F_* \cO_X(-D)$.  Twisting
both sides by $D$ and applying the projection formula gives us a global splitting
of $\cO_X \to F_* \cO_X( (p-1)D)$.  We may localize this at any stalk and
take $e = 1$.
\end{proof}

By Lemma
\ref{lem3.3}, the Richardson varieties $X^{v}_{w}$ are normal in characterstic $0$;
in particular, they are normal in characterstics $p\gg 0$. Thus,
combining Propositions \ref{prop4.1} and \ref{prop4.3}, we get the following.

\begin{corollary}\label{coro4.4}
With the notation as above, for any $v\leq w$, $(X^{v}_{w},\partial X^{v}_{w})$ is
sharply $F$-pure in characterstics $p\gg 0$.

\end{corollary}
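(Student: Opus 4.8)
The plan is to obtain the corollary as a direct synthesis of the three preceding results, the only genuinely characteristic-sensitive ingredient being normality. Recall that Proposition~\ref{prop4.1} produces, over \emph{every} algebraically closed field $k$ of characteristic $p>0$, a Frobenius splitting of $X^v_w(k)$ that compatibly splits the boundary $\partial X^v_w$; and Proposition~\ref{prop4.3} says that any irreducible normal variety over a field of characteristic $p>0$ which is Frobenius split compatibly splitting the support of a reduced divisor $D$ has $(X,D)$ sharply $F$-pure. So the corollary follows once we know that $X^v_w(k)$ is normal (it is automatically irreducible, being a Richardson variety) and that $\partial X^v_w$ is a reduced divisor whose support is exactly the compatibly split boundary subscheme.

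First I would pin down normality in characteristic $p\gg 0$. By Lemma~\ref{lem3.3} the Richardson variety $X^v_w$ is normal over a field of characteristic zero. Spreading out: realize $X^v_w$ together with $\partial X^v_w$ over a finitely generated $\mathbb{Z}$-subalgebra $A\subset\bc$. Since the locus in $\Spec A$ over which the geometric fibres of $X^v_w\to\Spec A$ are normal (equivalently, satisfy $R_1+S_2$) is open and contains the generic point, it contains all closed points outside a proper closed subset; that is, $X^v_w(k)$ is normal for $k$ of characteristic $p\gg 0$. Irreducibility of the geometric fibres is likewise an open condition, and holds over the generic point by Lemma~\ref{lem3.3}, so we may also assume irreducibility for $p\gg 0$. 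Alternatively, one may invoke the characteristic-free normality of Richardson varieties referenced in \cite{KnutsonLamSpeyerProjectionsOfRichardson} and \cite[Proposition 6.5]{kum12}.

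Next I would check that $\partial X^v_w$ is a reduced divisor. By the description recalled in \S\ref{sec3.5}, each irreducible component of $\partial X^v_w$ is of the form $X^{v'}_w$ (with $v\to v'$) or $X^v_{w'}$ (with $w'\to w$), hence of codimension one in $X^v_w$; so $\partial X^v_w$ is pure of codimension one and, being reduced by definition, is a reduced Weil divisor, with $\Supp(\partial X^v_w)=\partial X^v_w$. (That it is moreover $\bQ$-Cartier --- indeed Cartier under the identification $K_{X^v_w}=\co_{X^v_w}(-\partial X^v_w)$ --- is Lemma~\ref{lem3.4}, though this is not needed for sharp $F$-purity.) By Proposition~\ref{prop4.1} the Frobenius splitting of $X^v_w(k)$ compatibly splits $\partial X^v_w$; since $\partial X^v_w$ is a union of Richardson varieties, each separately compatibly split, its support --- all of $\partial X^v_w$ --- is compatibly split. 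Applying Proposition~\ref{prop4.3} with $X=X^v_w(k)$ and $D=\partial X^v_w$ for $p\gg 0$ then gives that $(X^v_w,\partial X^v_w)$ is sharply $F$-pure.

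The step that requires the most care --- and the only reason the statement is asserted for $p\gg 0$ rather than all $p>0$ --- is the spreading-out argument for normality; everything else is formal, since Propositions~\ref{prop4.1} and~\ref{prop4.3} are already available in all positive characteristics. If one prefers to avoid spreading out entirely, one can instead cite the characteristic-free normality of Richardson varieties, in which case the corollary holds for all $p>0$; but we only need, and only claim, $p\gg 0$.
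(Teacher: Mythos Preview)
Your proposal is correct and follows essentially the same approach as the paper: the paper simply says that by Lemma~\ref{lem3.3} the Richardson varieties are normal in characteristic~$0$, hence in characteristics $p\gg 0$, and then combines Propositions~\ref{prop4.1} and~\ref{prop4.3}. You have merely expanded the spreading-out step for normality and added the (easy) verifications that $\partial X^v_w$ is a reduced divisor and that $X^v_w$ is irreducible, which the paper leaves implicit.
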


We also recall the following from \cite[Theorem 3.7]{HaraWatanabeFRegFPure}. It should be mentioned that even though in loc. cit. the result is proved in the local situation, the same proof works for projective varieties.  We sketch a proof for the convenience of the reader.

\begin{theorem}\label{thm4.5}
Let $X$ be an irreducible normal variety over a field of characteristic $0$
and let $D$ be an effective $\mathbb{Q}$-divisor on $X$ such that
$K_{X}+D$ is $\mathbb{Q}$-Cartier and such that $\lceil D \rceil$ is reduced
and effective (i.e.,  the coefficients of $D$ are in $[0,1]$). If the reduction $(X_{p},D_{p})$
mod $p$ of $(X,D)$ is sharply $F$-pure for infinitely many primes $p$, then
$(X,D)$ is log canonical.
\end{theorem}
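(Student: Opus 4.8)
The plan is to argue by contradiction, moving a hypothetical sharp $F$-splitting modulo $p$ up to a resolution of singularities, where it forces a valuation-theoretic inequality incompatible with the failure of log canonicity. Since both log canonicity and sharp $F$-purity are local conditions on $X$, I would first reduce to the case where $X = \Spec R$ is affine. Suppose $(X,D)$ is not log canonical and fix a log resolution $\pi\colon \tilde X\to X$ of $(X,\Supp D)$, writing $K_{\tilde X}+\Delta_{\tilde X}=\pi^{*}(K_{X}+D)$ with $\Delta_{\tilde X}=\sum_i a_iE_i$ running over the prime divisors in $\Exc(\pi)\cup \pi^{-1}_{*}(\Supp D)$. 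Because every coefficient of $D$ lies in $[0,1]$, the strict-transform components contribute coefficients in $[0,1]$, so failure of log canonicity forces $a_{i_0}>1$ for some $\pi$-exceptional $E_{i_0}$; fix such an $E_{i_0}$ and note $a_{i_0}\in\tfrac1n\mathbb{Z}$, where $n$ is the index of $K_X+D$ (since $K_{\tilde X}$ is Cartier and $n(K_X+D)$ is Cartier).

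Next I would spread the data $(X,D,\pi,\{E_i\})$ out over a finitely generated $\mathbb{Z}$-subalgebra $A\subset\mathbb{C}$; by generic flatness and openness of the smooth and normal loci, there is a dense open $U\subset\Spec A$ such that for every closed point $\mu\in U$ with $p=\operatorname{char}\kappa(\mu)$, the reduction $\pi_\mu\colon\tilde X_\mu\to X_\mu$ is a proper birational morphism of normal varieties with $\tilde X_\mu$ smooth, $E_{i_0,\mu}$ prime, and $K_{\tilde X_\mu}+\Delta_{\tilde X_\mu}=\pi_\mu^{*}(K_{X_\mu}+D_\mu)$ still having coefficient $a_{i_0}>1$ along $E_{i_0,\mu}$; after shrinking $U$ also $p\nmid mn$, where $mD$ is integral. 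By hypothesis $(X_\mu,D_\mu)$ is sharply $F$-pure for infinitely many closed points, and only finitely many primes are excluded by the previous conditions, so I may fix one such $\mu\in U$. Choosing $e\gg 0$ divisible so that $(p^e-1)(K_{X_\mu}+D_\mu)$ is Cartier and $(p^e-1)D_\mu$ is integral (possible as $p\nmid mn$), sharp $F$-purity at level $e$ — which propagates to all sufficiently divisible levels — yields, over the stalk at the generic point of $\pi_\mu(E_{i_0,\mu})$, an $\mathcal{O}$-linear map $\phi\colon F^{e}_{*}\mathcal{O}_{X_\mu}((p^e-1)D_\mu)\to\mathcal{O}_{X_\mu}$ with $\phi(F^e_*1)=1$ (here $\lceil (p^e-1)D_\mu\rceil=(p^e-1)D_\mu$).

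The heart of the argument is a duality computation. Since $X_\mu$ is normal, Grothendieck duality for the finite morphism $F^e$ identifies, as reflexive sheaves, $\mathscr{H}om_{\mathcal{O}_{X_\mu}}(F^e_*\mathcal{O}_{X_\mu}((p^e-1)D_\mu),\mathcal{O}_{X_\mu})$ with $F^e_*\mathcal{O}_{X_\mu}(L)$, where $L:=(1-p^e)(K_{X_\mu}+D_\mu)=(1-p^e)K_{X_\mu}-(p^e-1)D_\mu$ is Cartier; so $\phi$ corresponds to a section $s$ of $\mathcal{O}_{X_\mu}(L)$, and $\phi(F^e_*1)=1$ says precisely that the canonical evaluation map $F^e_*\mathcal{O}_{X_\mu}(L)\to\mathcal{O}_{X_\mu}$ carries $s$ to $1$. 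Pulling back along $\pi_\mu$, and using $\pi_\mu^{*}L=(1-p^e)K_{\tilde X_\mu}-(p^e-1)\Delta_{\tilde X_\mu}$ together with the same duality on the smooth variety $\tilde X_\mu$, the section $\pi_\mu^{*}s$ corresponds to a map $\tilde\phi\colon F^e_*\mathcal{O}_{\tilde X_\mu}((p^e-1)\Delta_{\tilde X_\mu})\to\mathcal{O}_{\tilde X_\mu}$ which, by compatibility of the evaluation maps with pullback and $\pi_\mu^{\#}(1)=1$, still satisfies $\tilde\phi(F^e_*1)=1$. Localizing at the generic point $\eta$ of $E_{i_0,\mu}$ turns $\mathcal{O}_{\tilde X_\mu,\eta}$ into a DVR $V$ (with $(p^e-1)a_{i_0}\in\mathbb{Z}$), and $\tilde\phi$ into a splitting witnessing that $(V,\,a_{i_0}\cdot[\eta])$ is sharply $F$-pure. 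But an elementary Cartier-operator computation on a DVR shows $(V,\,c\cdot[\eta])$ is sharply $F$-pure exactly when $c\le 1$; since $a_{i_0}>1$, this is the desired contradiction, so $(X,D)$ is log canonical.

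I expect the delicate step to be the duality bookkeeping in the third paragraph: pinning down the Frobenius twists exactly — that $\mathscr{H}om(F^e_*\mathcal{O}(G),\mathcal{O})\cong F^e_*\mathcal{O}((1-p^e)K-G)$ holds reflexively on a normal variety, and that this identification is compatible under $\pi_\mu^{*}$ with the corresponding one on $\tilde X_\mu$ — and checking that the ``evaluation at $F^e_*1$'' morphism is compatible with pullback along the birational $\pi_\mu$, so that the splitting property genuinely descends to $V$. Once that compatibility is in hand, the remaining input is the essentially one-variable fact that a discrete valuation in which $D$ has coefficient exceeding $1$ can never be sharply $F$-pure, which is the expected positive-characteristic shadow of the statement that a divisor of discrepancy $<-1$ obstructs log canonicity. (The same argument applies verbatim when $X$ is projective rather than affine, since both hypotheses and conclusion are local on $X$.)
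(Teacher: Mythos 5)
Your argument is correct and is essentially the paper's own proof: both transport the splitting, reinterpreted via duality as a section of $\mathcal{O}((1-p^e)(K+\cdot))$, up the log resolution and localize at the generic point of the offending exceptional divisor, concluding from the fact that a map splitting Frobenius on a DVR can absorb a twist by at most $(p^e-1)$ times that divisor; the ``delicate'' compatibility of the duality identifications with pullback that you flag is precisely the verification the paper also leaves to the reader (that $\psi_\eta$ agrees with $\psi$ on the fraction field). The only real divergence is in handling the $\mathbb{Q}$-Cartier index: you arrange $p\nmid mn$ and pick $e$ with $p^{e}\equiv 1 \pmod{\mathrm{lcm}(m,n)}$ (legitimate, since sharp $F$-purity propagates to sufficiently divisible $e$) so that $(p^e-1)D_\mu$ is integral and $(1-p^e)(K+D)$ is Cartier, whereas the paper instead replaces $D_p$ by the divisor $\Delta=\frac{1}{p^e-1}B_x$ attached to the splitting itself, for which $(p^e-1)(K_{X_p}+\Delta)\sim 0$ automatically, and then uses $E_{\pi_p}(D_p)\geq E_{\pi_p}(\Delta)$ to reduce to that case.
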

\begin{proof} (Sketch)
Fix a log resolution $\pi : \tld{X} \to X$ of $(X, D)$ and write
\[
E_{\pi}(D) - D' = K_{\tld X} - {1 \over n} \big( \pi^{*}(nK_{X}+nD)\big)
\]
for a choice of $K_{\tld X}$ agreeing with $K_X$ wherever $\pi$ is an
isomorphism as in (\ref{eq1}).  We need to show that the coefficients of
$E_{\pi}(D)$ are $\geq -1$.  We reduce the entire setup to some characteristic
$p \gg 0$ where $(X_p, D_p)$ is sharply $F$-pure (for a discussion of this
process, see \cite{HaraWatanabeFRegFPure} or see
 \cite[Chapter 1.6]{BrionKumarFrobeniusSplitting} in the
 special case  when the varieties are defined over $\mathbb{Z}$).

Fix $x \in X_p$.
We have a Frobenius splitting $\phi$:
\[
\xymatrix{
\cO_{X_p,x} \ar@{^{(}->}[r] & F^e_* \cO_{X_p,x} \ar@/_2pc/[rr]_{\phi} \ar@{^{(}->}[r] & F^e_* \cO_{X_p,x}(\lceil (p^e -1)D_{p,x}
\rceil) \ar[r] & \cO_{X_p,x}
},
\]
for some $e \geq 1$.  This splitting $\phi \in \Hom(F^e_* \cO_{X_p,x},
\cO_{X_p,x})$ corresponds to a divisor $B_x \geq \lceil (p^e - 1)D_{p,x} \rceil$
on $\Spec \cO_{X_p,x}$,
 which is linearly equivalent to $(1-p^e)K_{X_p,x}$ as in
 \cite[\S1.3]{BrionKumarFrobeniusSplitting}.  Set $\Delta = {1 \over p^e - 1} B_x$.  Observe that $(p^e -1)(K_{X_p,x} + \Delta)$ is
 linearly equivalent to $0$; and thus $K_{X_p,x} + \Delta$ is $\bQ$-Cartier.
 Since
 $$\Delta = {1 \over p^e - 1} B_x \geq {1 \over p^e - 1} \big\lceil (p^e - 1)
 D_{p,x} \big\rceil \geq D_{p,x},$$
 we know
\[
E_{\pi_p}(D_{p,x}) \geq E_{\pi_p}\left(\Delta\right).
\]
Therefore, it is sufficient to prove that the coefficients of
$E_{\pi_p}\left(\Delta\right)$ are $\geq -1$.  Note that it is
possible that $\pi_p$ is not a log resolution for $\Delta$, but this will not matter
for us.

We can factor the splitting $\phi$ as follows (we leave this verification to the reader):
\[
\xymatrix{
F^e_* \cO_{X_p,x} \ar@/_2pc/[rrr]_{\phi} \ar@{^{(}->}[r] & F^e_* \cO_{X_p,x}(\lceil (p^e - 1) D_{p,x} \rceil) \ar@{^{(}->}[r] & F^e_* \cO_{X_p,x}((p^e - 1)\Delta) \ar[r]^-{\psi} & \cO_{X_p,x}
}.
\]

Let $C$ be any prime exceptional divisor of $\pi_p : {\tld X}_p \to
X_p$
with generic point $\eta$ and let $\cO_{{\tld X}_p, \eta}$ be the associated
valuation ring.  Let $a \in \bQ$  be the coefficient of $C$ in $E_{\pi_p}\left(\Delta\right)$.  There are two cases:
\begin{itemize}
\item[(i)]  $a > 0$
\item[(ii)]  $a \leq 0$.
\end{itemize}
Since we are trying to prove that $a \geq -1$, if we are in case (i), we are
already done.  Therefore, we may assume that $a \leq 0$.
By tensoring $\phi$ with the fraction field $K(X_p) = K(\tld X_p)$, we obtain a map $\phi_{K(X_p)} : F^e_* K(X_p) \to K(X_p)$.

\begin{claim}
By restricting $\phi_{K(X_p)}$ to the stalk $F^e_* \cO_{\tld{X}_p, \eta}$, we
 obtain a map $\phi_{\eta}$ which factors as:
\[
\phi_{\eta} : F^e_* \cO_{\tld{X}_p, \eta} \hookrightarrow F^e_* \cO_{\tld{X}_p, \eta}\Big(-a(p^e - 1) C\Big) \to \cO_{\tld{X}_p, \eta}.
\]
\end{claim}
\begin{proof}[Proof of the claim]
Indeed, similar arguments are used to prove Grauert-Riemenschneider vanishing for
Frobenius split varieties
\cite{MehtaVanDerKallenGRVanishing},
\cite[Theorem 1.3.14]{BrionKumarFrobeniusSplitting}.  We briefly sketch the
idea of the proof.

We identify $\psi$ with a section $s \in \cO_{X_p,x}((1-p^e)(K_{X_p,x} + \Delta))
\cong \Hom_{\cO_{X_p,x}}(F^e_* \cO_{X_p,x}((p^e -1)\Delta ), \cO_{X_p,x})$.  Recall
that $\pi^*(1-p^e)(K_{X_p,x} + \Delta) = (1-p^e)K_{\tld{X}_p, \eta} -
(1-p^e)E_{\pi_p}(\Delta) + (1-p^e)\Delta'$, where $\Delta'$ is the strict transform of $\Delta$.  Thus we can pull $s$ back to a section
\[
\begin{array}{rcl}
t & := & \pi^* s \in \cO_{\tld X_p, \eta}(\pi^* (1 - p^e)(K_{X_p,x} + \Delta))\\
 & = & \cO_{\tld X_p, \eta}\left((1 - p^e)K_{\tld X_p, \eta} - (1-p^e)E_{\pi_p}
 \left(\Delta\right) +(1-p^e)\Delta'\right)\\
 & = & \cO_{\tld X_p, \eta}\left( (1-p^e)K_{\tld X_p,\eta} - (1-p^e)aC \right)\\
 & \cong & \Hom_{\cO_{\tld X_p, \eta}}\left(F^e_* \cO_{\tld X_p, \eta}
 (-a(p^e - 1) C), \cO_{\tld X_p, \eta}\right).
\end{array}
\]
It is not hard to see that the homomorphism $\psi_{\eta} : F^e_* \cO_{\tld X_p, \eta}(-a(p^e - 1) C) \to \cO_{\tld X_p, \eta}$ corresponding to $t$ can be chosen to agree with $\psi$ on the fraction field $K(X) = K(\tld X)$, we leave this verification to the reader.  It follows that $\phi_{\eta}$ is the composition $F^e_* \cO_{\tld X_p, \eta} \hookrightarrow F^e_* \cO_{\tld X_p, \eta}(-a(p^e - 1) C) \xrightarrow{\psi_{\eta}} \cO_{\tld X_p, \eta}$.
This concludes the proof of the claim.
\end{proof}

Now we complete the proof of Theorem\ref{thm4.5}.
Note that $\phi_{\eta}$ is a splitting because $\phi$ was a splitting and both the
maps agree on the field of fractions.  Therefore,
 $0 \leq -a(p^e - 1) \leq p^e -1$ since the splitting along a divisor can not
 vanish to order greater than $p^e - 1$.  Dividing by $(1 - p^e)$ proves that $a \geq -1$ as desired.
\end{proof}

We also recall the following.

\begin{lemma}\label{lem4.6}
Let $X$ be an irreducible normal projective variety over $\mathbb{C}$
and let $D=\sum a_{i}D_{i}$ be an effective $\mathbb{Q}$-divisor on
$X$ such that $X\backslash \Supp D$ is smooth and $(X,D)$ is log
canonical. Now, consider a $\mathbb{Q}$-divisor $\Delta=\sum
c_{i}D_{i}$ with $c_{i}\in [0,1)$ and $c_{i}<a_{i}$ for all $i$, such
that
  $K_{X}+\Delta$ is $\mathbb{Q}$-Cartier. Then, $(X,\Delta)$ is KLT.
\end{lemma}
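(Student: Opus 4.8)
The plan is to produce, from the log resolution witnessing that $(X,D)$ is log canonical, a single log resolution on which one reads off that $(X,\Delta)$ is KLT. First I would fix a log resolution $\pi\colon\tilde X\to X$ of $(X,D)$ such that, writing $E_\pi(D)=\sum_j e_j(D)\,C_j$ over the exceptional prime divisors $C_j$ of $\pi$, all coefficients satisfy $e_j(D)\ge -1$; such a $\pi$ exists by the log canonicity hypothesis, and since $\Supp\Delta\subseteq\Supp D$ it is automatically also a log resolution of $(X,\Delta)$. Both $K_X+D$ and $K_X+\Delta$ are $\mathbb{Q}$-Cartier (the first by log canonicity, the second by hypothesis), hence so is $D-\Delta=\sum_i(a_i-c_i)D_i$; this divisor is effective with \emph{strictly} positive coefficients and with $\Supp(D-\Delta)=\Supp D$. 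Using one and the same representative $K_{\tilde X}$ in the defining relations \eqref{eq1} for $(X,D)$ and for $(X,\Delta)$ and subtracting, I obtain
\[
E_\pi(\Delta)=E_\pi(D)+\Bigl(\pi^*(D-\Delta)-(D'-\Delta')\Bigr),
\]
where $D'$ and $\Delta'$ are strict transforms. The bracketed term is the exceptional part of the effective divisor $\pi^*(D-\Delta)$ (its non-exceptional part being $D'-\Delta'$, since $\pi$ is an isomorphism in codimension one), hence an effective divisor supported on $\Exc(\pi)$; writing $g_j\ge 0$ for its coefficient along $C_j$, one gets $E_\pi(\Delta)=\sum_j(e_j(D)+g_j)C_j$, so it remains to verify $e_j(D)+g_j>-1$ for each $j$.

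This is settled by a dichotomy on $g_j$, which is where the remaining hypotheses enter. If $g_j>0$, then immediately $e_j(D)+g_j\ge -1+g_j>-1$. If instead $g_j=0$, then $C_j$ is not a component of $\pi^*(D-\Delta)$; since $D-\Delta$ is effective with all coefficients positive and $\Supp(D-\Delta)=\Supp D$, pulling back a local equation of a Cartier multiple of $D-\Delta$ shows this is possible only if the generic point of the center $\pi(C_j)$ lies outside $\Supp D$, i.e. in the open locus $X^\circ:=X\setminus\Supp D$, which is smooth by hypothesis. Over $\pi^{-1}(X^\circ)$ both $\Delta$ and $\Delta'$ vanish (the components of $\Delta$ map into $\Supp D$), so the defining relation identifies the coefficient of $C_j$ in $E_\pi(\Delta)$ with the discrepancy of the exceptional divisor $C_j$ with respect to the smooth variety $X^\circ$, which is positive (cf. \cite{KollarMori}); in particular $>-1$. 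Hence every exceptional coefficient of $E_\pi(\Delta)$ exceeds $-1$, and since moreover the coefficients $c_i$ of $\Delta$ lie in $[0,1)$ and $K_X+\Delta$ is $\mathbb{Q}$-Cartier, the pair $(X,\Delta)$ is KLT.

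The verifications I regard as routine are the identification of the non-exceptional part of $\pi^*(D-\Delta)$ with $D'-\Delta'$ and the local-equation argument showing $g_j=0\Rightarrow\pi(C_j)\not\subseteq\Supp D$ at the generic point. \textbf{The crux} is the case $g_j=0$: an exceptional divisor over which $\Delta$ records no information. The hypothesis $c_i<a_i$ guarantees $g_j$ can vanish only when the center of $C_j$ escapes $\Supp D$ entirely, and the hypothesis that $X\setminus\Supp D$ is smooth then forces a positive discrepancy there; dropping either hypothesis one could be left with an exceptional coefficient equal to $-1$.
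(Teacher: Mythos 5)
Your proof is correct and rests on essentially the same comparison as the paper's, namely $E_{\pi}(\Delta)=E_{\pi}(D)+\bigl(\pi^{*}(D-\Delta)-(D'-\Delta')\bigr)$ with the exceptional correction term effective and strictly positive along every exceptional divisor whose center lies in $\Supp D$ (using $c_i<a_i$). The only difference is cosmetic: the paper chooses the log resolution to be an isomorphism over the smooth locus $X\setminus\Supp D$, so that every exceptional center lies in $\Supp D$ and the strict inequality holds on all exceptional divisors at once, whereas you allow an arbitrary log resolution witnessing log canonicity and dispose of exceptional divisors with center outside $\Supp D$ by the positive-discrepancy bound over a smooth variety -- two equivalent ways of invoking the smoothness hypothesis.
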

\begin{proof}
We may choose a resolution of singularities $\pi : \tld X \to X$ which is a log
resolution for  $(X, D)$ (and  hence also for $(X, \Delta)$) by
\cite{HironakaResolution}.  Furthermore, we may assume that $\pi$ is an
isomorphism over $X\setminus \Supp(D) \subseteq X\setminus \Supp(\Delta)$.
 Therefore, we see that
\[
E_{\pi}(\Delta) = K_{\tld X} - {1 \over n} \pi^*\big(n(K_X + \Delta)\big) + \Delta'
\geq K_{\tld X} - {1 \over n} \pi^*\big(n(K_X + D)\big) + D' = E_{\pi}(D)
\]
with strict inequality in every nonzero coefficient.  Since every coefficient of $E_{\pi}(D)$
is $\geq -1$, we are done.
\end{proof}

We now come to the proof of Theorem \ref{thm2.2} in the Kac-Moody case.

\begin{proof}[Proof of Theorem \ref{thm2.2}]
\label{sec4.7}
We begin by reducing our entire setup to characteristic $p \gg 0$.

By Corollary \ref{coro4.4}, $(X^{v}_{w}(k),\partial X^{v}_{w}(k))$
is sharply $F$-pure for any algebraically closed field $k$ of  characteristic
$p\gg 0$. Moreover, $K_{X_{w}^{v}}+\partial X^{v}_{w}=0$
 by Lemma \ref{lem3.4}; in particular, it is Cartier.

Hence, returning now to characteristic zero, by Theorem \ref{thm4.5}, $(X^{v}_{w},\partial X^{v}_{w})$ is
log canonical. Furthermore, the $\mathbb{Q}$-divisor
$\Delta=\sum\limits_{i}(1-\frac{b_{i}}{N})X_{i}$, where $\partial
X^{v}_{w}=\sum X_{i}$, clearly satisfies all the assumptions of
Lemma \ref{lem4.6}. Thus, $(X^{v}_{w},\Delta)$ is KLT, proving
Theorem \ref{thm2.2} in the Kac-Moody case as well.
\end{proof}

\begin{remark}
One can define KLT singularities in positive characteristic too by considering
all valuations on all normal birational models.  In particular, a similar argument
 shows that any normal Richardson variety is  KLT  in characteristic $p > 0$ as
 well. (It is expected that, for any symmetrizable Kac-Moody group, all the
 Richardson varieties $X^v_w$ are normal in any characteristic.)
\end{remark}


\begin{thebibliography}{MvdK92}

\bibitem[AB04]{AlexeevBrionStableReductiveVarieties2}
{\sc V.~Alexeev and M.~Brion}: \emph{Stable reductive varieties. {II}.
  {P}rojective case}, Adv. Math. \textbf{184} (2004), no.~2, 380--408.
  {\sf\scriptsize 2054021 (2005c:14012)}

\bibitem[AS12]{AndersonStapledonSchubertVarietiesAreLogFano}
{\sc D.~Anderson and A.~Stapledon}: \emph{Schubert varieties are log {F}ano
  over the integers}, arXiv:1203.6678, to appear in Proc. Amer. Math. Soc.

\bibitem[Bal11]{BalanStandardMonomialTheoryForRichardson}
{\sc M.~Balan}: \emph{Standard monomial theory for desingularized Richardson
  varieties in the flag variety $gl(n)/b$}, arXiv:1107.3654.

\bibitem[BiK10]{BilleyCoskunSingsOfRichardson}
{\sc S.~Billey and I.~Coskun}: \emph{Singularities of generalized Richardson
varieties}, arXiv:1008.2785.

\bibitem[Bri02]{BrionPositivityInGrothendieckGroupOfFlagVars}
{\sc M.~Brion}: \emph{Positivity in the {G}rothendieck group of complex flag
  varieties}, J. Algebra \textbf{258} (2002), no.~1, 137--159, Special issue in
  celebration of Claudio Procesi's 60th birthday. {\sf\scriptsize 1958901
  (2003m:14017)}

\bibitem[Bri05]{BrionLecturesOnGeometryOfFlagVarieties}
{\sc M.~Brion}: \emph{Lectures on the geometry of flag varieties}, Topics in
  cohomological studies of algebraic varieties, Trends Math., Birkh\"auser,
  Basel, 2005, pp.~33--85. {\sf\scriptsize 2143072 (2006f:14058)}

\bibitem[BK05]{BrionKumarFrobeniusSplitting}
{\sc M.~Brion and S.~Kumar}: \emph{Frobenius Splitting Methods in Geometry and
  Representation Theory}, Progress in Mathematics, vol. 231, Birkh\"auser,
  Boston, MA, 2005. {\sf\scriptsize MR2107324 (2005k:14104)}

\bibitem[BL03]{BrionLakshmibaiAGeometricApproachToStandardMonomialTheory}
{\sc M.~Brion and V.~Lakshmibai}: \emph{A geometric approach to standard
  monomial theory}, Represent. Theory \textbf{7} (2003), 651--680 (electronic).
  {\sf\scriptsize 2017071 (2004m:14106)}

\bibitem[CLS11]{CoxLittleSchenckToricVarieties}
{\sc D.~A. Cox, J.~B. Little, and H.~K. Schenck}: \emph{Toric Varieties},
  Graduate Studies in Mathematics, vol. 124, American Mathematical Society,
  Providence, RI, 2011. {\sf\scriptsize 2810322}

\bibitem[Deb01]{DebarreHigherDimensionalAG}
{\sc O.~Debarre}: \emph{Higher-Dimensional Algebraic Geometry}, Universitext,
  Springer-Verlag, New York, 2001. {\sf\scriptsize 1841091 (2002g:14001)}


\bibitem[E95] {E} {\sc D. Eisenbud}: \emph{Commutative Algebra with a View Toward Algebraic
 Geometry}, GTM ({\bf 150}), Springer-Verlag, 1995.

\bibitem[Elk81]{ElkikRationalityOfCanonicalSings}
{\sc R.~Elkik}: \emph{Rationalit\'e des singularit\'es canoniques}, Invent.
  Math. \textbf{64} (1981), no.~1, 1--6. {\sf\scriptsize MR621766 (83a:14003)}

\bibitem[GK08]{GrahamKumarOnPositivityInTEquivariantKTheory}
{\sc W.~Graham and S.~Kumar}: \emph{On positivity in {$T$}-equivariant
  {$K$}-theory of flag varieties}, Int. Math. Res. Not.  (2008), Art. ID
  rnn 093, 43. {\sf\scriptsize 2439542 (2009g:14061)}

\bibitem[HW02]{HaraWatanabeFRegFPure}
{\sc N.~Hara and K.-I. Watanabe}: \emph{F-regular and {F}-pure rings vs. log
  terminal and log canonical singularities}, J. Algebraic Geom. \textbf{11}
  (2002), no.~2, 363--392. {\sf\scriptsize MR1874118 (2002k:13009)}

\bibitem[Har77]{Hartshorne}
{\sc R.~Hartshorne}: \emph{Algebraic Geometry},  Graduate Texts in Mathematics,
 No. 52, Springer-Verlag, New York,
  1977. {\sf\scriptsize MR0463157 (57
  \#3116)}

\bibitem[Hir64]{HironakaResolution}
{\sc H.~Hironaka}: \emph{Resolution of singularities of an algebraic variety
  over a field of characteristic zero. {I}, {II}}, Ann. of Math. (2) 79 (1964),
  109--203; ibid. (2) \textbf{79} (1964), 205--326. {\sf\scriptsize MR0199184
  (33 \#7333)}

\bibitem[Hsi11]{HsiaoMultiplierIdealsOnDeterminantalVarieties}
{\sc J.-C. Hsiao}: \emph{Multiplier ideals on determinantal varieties},
  preprint.

\bibitem[KS09]{KashiwaraShimozonoEquivariantKTheory}
{\sc M.~Kashiwara and M.~Shimozono}: \emph{Equivariant {$K$}-theory of affine
  flag manifolds and affine {G}rothendieck polynomials}, Duke Math. J.
  \textbf{148} (2009), no.~3, 501--538. {\sf\scriptsize 2527324 (2011c:19013)}

\bibitem[KLS10]{KnutsonLamSpeyerProjectionsOfRichardson}
{\sc A.~Knutson, T.~Lam, and D.~E. Speyer}: \emph{Projections of Richardson
  varieties}, arXiv:1008.3939, to appear in J. Reine Angew. Math.

\bibitem[KWY12]{KnutsonWooYongSingularitiesOfRichardson}
{\sc A.~Knutson, A.~Woo, and A.~Yong}: \emph{Singularities of Richardson varieties}, arXiv:1209.4146.

\bibitem[KM98]{KollarMori}
{\sc J.~Koll{\'a}r and S.~Mori}: \emph{Birational Geometry of Algebraic
  Varieties}, Cambridge Tracts in Mathematics, vol. 134, Cambridge University
  Press, Cambridge, 1998. {\sf\scriptsize MR1658959
  (2000b:14018)}

\bibitem[Kum02]{KumarKacMoodyGroupsBook}
{\sc S.~Kumar}: \emph{Kac-{M}oody Groups, their Flag Varieties and
  Representation Theory}, Progress in Mathematics, vol. 204, Birkh\"auser,
  Boston, MA, 2002. {\sf\scriptsize 1923198 (2003k:22022)}

\bibitem[Kum12]{kum12}
{\sc S.~Kumar}: \emph{{Positivity in $T$-equivariant {$K$}-theory of flag varieties
  associated to {K}ac-{M}oody groups (with an appendix by {M}. {K}ashiwara)}},
  arXiv:1209.6422.




\bibitem[Mat88]{mat88}
{\sc O.~Mathieu}: \emph{Formules de caract\`eres pour les alg\`ebres de Kac-Moody
g\'en\'erales}, Ast\'erisque \textbf{159--160} (1988),  1--267.  {\sf\scriptsize MR0980506 (90d:17024)}


\bibitem[Mat89]{MathieuConstructionDunGroupeDeKacMoody}
{\sc O.~Mathieu}: \emph{Construction d'un groupe de {K}ac-{M}oody et
  applications}, Compositio Math. \textbf{69} (1989), no.~1, 37--60.
  {\sf\scriptsize 986812 (90f:17012)}


  \bibitem[Mat11]{mat12}
{\sc O.~Mathieu}: Letter to S. Kumar (21 February, 2011).


\bibitem[MvdK92]{MehtaVanDerKallenGRVanishing}
{\sc V.~B. Mehta and W.~van~der Kallen}: \emph{On a {G}rauert-{R}iemenschneider
  vanishing theorem for {F}robenius split varieties in characteristic {$p$}},
  Invent. Math. \textbf{108} (1992), no.~1, 11--13. {\sf\scriptsize 1156382
  (93a:14017)}

\bibitem[T81]{tits1}
{\sc J.~Tits}: \emph{R\'esum\'e de cours},  Annuaire Coll\`ege de France
\textbf{81} (1980--81),  75--87.


\bibitem[T82]{tits2}
{\sc J.~Tits}: \emph{R\'esum\'e de cours},  Annuaire Coll\`ege de France
\textbf{82} (1981--82),  91--106.


\bibitem[T85]{tits3}
{\sc J.~Tits}: \emph{Groups and group functors attached to Kac-Moody data},
in: Lecture Notes in Mathematics vol. 1111,  (1985),  193--223. {\sf\scriptsize MR0797422 (87a:22032)}

\end{thebibliography}

\def\cprime{$'$} \def\cprime{$'$}
  \def\cfudot#1{\ifmmode\setbox7\hbox{$\accent"5E#1$}\else
  \setbox7\hbox{\accent"5E#1}\penalty 10000\relax\fi\raise 1\ht7
  \hbox{\raise.1ex\hbox to 1\wd7{\hss.\hss}}\penalty 10000 \hskip-1\wd7\penalty
  10000\box7}
\providecommand{\bysame}{\leavevmode\hbox to3em{\hrulefill}\thinspace}
\providecommand{\MR}{\relax\ifhmode\unskip\space\fi MR}
\providecommand{\MRhref}[2]{%
  \href{http://www.ams.org/mathscinet-getitem?mr=#1}{#2}
}
\providecommand{\href}[2]{#2}





\end{document}